\title{ The $K$-theory of the Compact Quantum Group $SU_{q}(2)$ for $q=-1$ }
\author{ Sel\c{c}uk Barlak }
\address{Westf\"alische Wilhelms-Universit\"at, Fachbereich Mathematik, \phantom{--------------}\linebreak \text{}\hspace{3.5mm} Einsteinstrasse 62, 48149 M\"unster, Germany}
\email{selcuk.barlak@uni-muenster.de}
\thanks{\emph{Supported by:} SFB 878 \emph{Groups, Geometry and Actions} and GIF Grant 1137-30.6/2011}
\subjclass[2010]{46L80, 46L65}
\newtheorem{theorem}{Theorem}
\newtheorem{lemma}{Lemma}
\newtheorem{proposition}{Proposition}
\newcommand{\C}{\mathbb C}
\newcommand{\M}{\mathcal M}
\newcommand{\Z}{\mathbb Z}
\newcommand{\op}{\operatorname}
\newcommand{\id}{\op{id}}
\theoremstyle{definition}
\newtheorem{definition}{Definition}
\newtheorem{remark}{Remark}
\numberwithin{theorem}{section}
\numberwithin{definition}{section}
\numberwithin{proposition}{section}
\numberwithin{lemma}{section}
\numberwithin{corollary}{section}
\numberwithin{equation}{section}
\numberwithin{remark}{section}
\begin{document}

\begin{abstract}
We determine the $K$-theory of the $C^{*}$-algebra $C(SU_{-1}(2))$ and describe its spectrum. Moreover, we exhibit a continuous $C^{*}$-bundle over $[-1,0)$ whose fibre at $q$ is isomorphic to $C(SU_{q}(2))$.
\end{abstract}

\maketitle


\setcounter{section}{-1}

\section{Introduction}
In the Woronowicz' theory of compact quantum groups \cite{Woronowicz1987a,Woronowicz1998}, $q$-defor\-ma\-tions of compact Lie groups serve as fundamental examples \cite{Woronowicz1987,Woronowicz1988}. In the algebraic setting, Drinfel'd and Jimbo introduced $q$-deformed semisimple Lie groups \cite{Drinfeld1986, Jimbo1985}. In the case of compact Lie groups, Rosso showed in \cite{Rosso1990} that the approaches of Woronowicz and Drinfel'd and Jimbo are essentially equivalent. Since the quantum group $SU_{q}(2)$ is a fundamental example of a $q$-deformation, it attracts a great deal of attention. It has been studied intensively from various perspectives, with most research focussing on the case of a positive deformation parameter $q$, see for example \cite{ChakrabortyPal2003,DabrowskiGiovanniSitarzSuilekomVarilly2005}. In particular, the $K$-theory for $C(SU_q(2))$ with positive parameter $q$ has been computed in \cite{MasudaNakagamiWatanabe1989}. Recently, the quantum groups $SU_{q}(2)$ for $q<0$ have attracted some attention as well, most notably because of the close relation to free orthogonal quantum groups $A_{o}(F)$ defined for $F\in GL(n,\mathbb C)$ with $F\bar{F}\in\mathbb R\cdot1$, \cite{Banica1997,BichonDeRijdtVaes2006,VanDaeleWang1996}. More specifically, we have
\[
SU_{-1}(2)\cong A_{o}(2)
\]
by \cite[Proposition 7]{Banica1997}. In the context of recent work on the Baum-Connes conjecture for free orthogonal groups \cite{Voigt2011}, and since $SU_{-1}(2)$ is the only free orthogonal group not treated in this setting, it is natural to ask for the $K$-theory of $C(SU_{-1}(2))$. In order to determine $K_*(C(SU_{-1}(2)))$, we take a different approach and use Zakrzewski's \cite{Zakrzewski1991} concrete realisation of $C(SU_{-1}(2))$ as a sub-$C^{*}$-algebra of $M_{2}(C(SU(2)))$.

Let us explain how this work is organized. We start with a preliminary section on compact quantum groups and on Zakrzewski's result, for which we present an alternative proof. In Section \ref{spectrum}, we determine the spectrum of $C(SU_{-1}(2))$. In Section \ref{K-theory}, we compute the $K$-theory of $C(SU_{-1}(2))$. As for $C(SU_{q}(2))$ with $q\neq-1$, it turns out that both $K$-groups of $C(SU_{-1}(2))$ are isomorphic to $\mathbb Z$ with generators being the class of the unit and the class of the canonical unitary in $M_{2}(C(SU_{-1}(2)))$, respectively. In the last section, we use the Haar state on $C(SU_{-1}(2))$ to prove the existence of a continuous $C^*$-bundle over $[-1,0)$ whose fibre at $q$ is isomorphic to $C(SU_{q}(2))$. This is reminiscent of Blanchard's result \cite{Blanchard1996} that the positive $q$-deformations of $SU(2)$ form a continuous $C^*$-bundle over $(0,1]$.

This article is based on the author's diploma thesis.  We remark that at least parts of the above results are certainly known to the experts. However, to the best of our knowledge they are not documented in the literature.

I would like to thank the advisor of my diploma thesis, Christian Voigt, for fruitful discussions and his good advice. Moreover, I would like to thank Dominic Enders, Sven Raum, and Christoph Winges for useful comments on earlier versions of this article. I thank Adam Skalski who informed me of Zakrzewski's article.

\section{Preliminaries}
\label{prelim}

We begin by recalling the definition of a compact quantum group, which goes back to Woronowicz. For a detailed treatment of quantum groups, we refer to the literature \cite{KlimykSchmuedgen1997,Timmermann2008}. When dealing with tensor products, we always use the minimal tensor product of $C^{*}$-algebras.

\begin{definition}
Let $A$ be a unital $C^{*}$-algebra and $\Delta:A\to A\otimes A$ a unital $*$-homomorphism. The pair $(A,\Delta)$ is called a \emph{compact quantum group} if the following conditions hold:
\begin{enumerate}
	\item[i)] $\Delta$ is \emph{coassociative}, i.e.\ $(\Delta \otimes \id) \circ  \Delta = (\id \otimes\Delta ) \circ \Delta$,
	\item[ii)] $(A,\Delta)$ is \emph{bisimplifiable}, i.e.\ $\Delta(A)(A\otimes 1)$ and $\Delta(A) (1\otimes A)$ are linearly dense in $A\otimes A$.
\end{enumerate}
The $*$-homomorphism $\Delta$ is called the \emph{comultiplication} of the compact quantum group.
\end{definition}

If $G$ is a compact group, $C(G)$ is a compact quantum group with comultiplication
\[
\Delta:C(G)\longrightarrow C(G)\otimes C(G) \cong C(G\times G), \quad \Delta(f)(s,t) := f(s\cdot t).
\]
In the special case where $G$ is a closed subgroup of the unitary group $\mathcal U(n)$, the comultiplication satisfies
\[
\Delta(u_{ij})=\sum\limits_{k=1}^{n} u_{ik} \otimes u_{kj},
\]
with $u_{ij}$ denoting the canonical projection onto the $(i,j)$-th coordinate.

\begin{definition}[\cite{Woronowicz1987}]
For $q\in[-1,1]$, let $C(SU_{q}(2))$ be the universal unital $C^{*}$-algebra with generators $\alpha$ and $\gamma$ satisfying the relations
\begin{equation}
\label{relationsSU-1}
\begin{split}
\alpha^{*}\alpha+\gamma^{*}\gamma= 1,\quad \alpha\alpha^{*} +  q^{2} \gamma\gamma^{*} &= 1 ,\\
\alpha\gamma = q \gamma\alpha , \quad \alpha\gamma^{*} = q\gamma^{*}\alpha ,\quad \gamma\gamma^{*} &= \gamma^{*}\gamma.
\end{split}
\end{equation}
\end{definition}

Note that the above relations imply, that
\[
u_{q}:=\begin{pmatrix}\alpha&-q\gamma^{*}\\\gamma & \alpha^{*}\end{pmatrix} \in M_{2}(C(SU_{q}(2)))
\]
is a unitary. The comultiplication $\Delta_{q}:C(SU_{q}(2))\rightarrow C(SU_{q}(2))\otimes C(SU_{q}(2))$ given by
\[
\Delta_{q}(\alpha) := \alpha\otimes \alpha  -  q\gamma^{*}\otimes \gamma\quad \text{and}\quad \Delta_{q}(\gamma) := \gamma\otimes \alpha  +  \alpha^*\otimes\gamma
\]
turns $(C(SU_{q}(2)),\Delta_{q})$ into a compact quantum group for $q\neq0$. It has been shown in \cite{Woronowicz1987} that for $q\neq 0$,
\[
\mathcal B_q := \left\lbrace \alpha^{k}\gamma^{l}{\gamma^{*}}^{m},\ {\alpha^{*}}^{p}\gamma^{l}{\gamma^{*}}^{m} \ : \ k,l,m\in\mathbb N_{0},\ p\in \mathbb N \right\rbrace
\]
is a linearly independent set spanning the canonical dense $*$-subalgebra of $C(SU_q(2))$. Of course, we agree on the convention that $\alpha^0:=1$ etc.. Define
\[
\eta_{klm}:=\begin{cases}\alpha^{k}\gamma^{l}{\gamma^{*}}^{m}& ,\quad \text{if}\ k\geq0,\ m,n\geq0,\\ {\alpha^*}^{-k}\gamma^{l}{\gamma^{*}}^{m}& ,\quad \text{if}\ k<0,\ m,n\geq0.\end{cases}
\]

Observe that for $q=\pm 1$ the relations \eqref{relationsSU-1} are symmetric in $\alpha$ and $\gamma$. Furthermore, $C(SU_1(2))$ is a commutative $C^*$-algebra with spectrum homeomorphic to 
\[
S^3=\left\lbrace(a,c)\in \C^{2}\ :\ |a|^{2}+|c|^{2}=1 \right\rbrace,
\]
and the homeomorphism 
\[
S^3 \stackrel{\cong}{\longrightarrow} SU(2),\quad (a,c)\mapsto \begin{pmatrix}a&-\bar{c}\\c& \bar{a}\end{pmatrix}
\]
induces an isomorphism
\[
\psi:C(SU_1(2))\longrightarrow C(SU(2)),\quad \psi(\alpha)((a,c)):=a,\ \psi(\gamma)((a,c)):=c.
\]
Moreover, $\psi$ fits into a commutative diagram
\[
	\xymatrix{
	C(SU_1(2)) \ar[r]^/-0.6cm/{\Delta_1} \ar[d]^\psi & 	C(SU_1(2))\otimes C(SU_1(2)) \ar[d]^{\psi\otimes \psi}\\
	C(SU(2)) \ar[r]^/-0.6cm/{\Delta} & C(SU(2))\otimes C(SU(2))	
	}
\]
and thus induces an isomorphism $(C(SU_1(2)),\Delta_1)\cong (C(SU(2)),\Delta)$ of compact quantum groups. We shall use this identification throughout this work.

\begin{definition}
Let $(A,\Delta)$ be a compact quantum group. A state $h$ on $A$ is called \emph{left invariant} if
\[
(\id\otimes h)\circ \Delta(a)=h(a)\cdot 1\quad \text{for all}\ a\in A.
\]
It is called \emph{right invariant} if
\[
(h\otimes \id) \circ \Delta(a) = h(a)\cdot 1\quad \text{for all}\ a\in A.
\]
\end{definition}

Every compact quantum group has a unique left and right  invariant state \cite{VanDaele1995}, called the \emph{Haar state}. For $q\in (-1,1]\setminus\left\lbrace0\right\rbrace$, the Haar state $h_q$ of $C(SU_q(2))$ can be characterized as follows.

\begin{proposition}
\label{HaarFaithful}
For $q\in (-1,1)\setminus\left\lbrace0\right\rbrace$, the Haar state $h_{q}$ is the unique state on $C(SU_{q}(2))$ satisfying
\[
h_{q}(\eta_{klm})=
\begin{cases}
\frac{1 - q^2}{1 - q^{2m+2}}& , \quad \text{if}\ k=0,\ l=m,\\
0 & , \quad \text{otherwise.}
\end{cases}
\]
For $q=1$, the Haar state $h_{1}$ is uniquely determined by 
\[
h_{1}(\eta_{klm})=\begin{cases}
\frac{1}{m+1} & ,\quad \text{if}\ k=0,\ l=m ,\\			0 & , \quad \text{otherwise.}	
\end{cases}
\]
Furthermore, all the Haar states are faithful.
\end{proposition}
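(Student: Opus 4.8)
The plan is to reduce the verification to the known structure theory of $C(SU_q(2))$ and then check the two invariance conditions only on the spanning set $\mathcal B$, using the explicit formula for $\Delta_q$. First I would fix $q\in(-1,1)\setminus\{0\}$ (the case $q=1$ being entirely analogous, with $\tfrac{1-q^2}{1-q^{2m+2}}$ replaced by its limit $\tfrac1{m+1}$) and observe that, since $\mathcal B$ is a linear basis of the dense $*$-subalgebra $\mathcal A$ and the Haar state is continuous, it suffices to define a linear functional on $\mathcal A$ by the stated values on $\eta^{klm}$, prove it is positive and unital on $\mathcal A$ so that it extends to a state $h_q$ on $C(SU_q(2))$, and then prove that this $h_q$ is both left and right invariant on $\mathcal A$; uniqueness is then immediate from the general uniqueness of the Haar state quoted from \cite{VanDaele}, and indeed the uniqueness among \emph{all} states satisfying the displayed formula is even more immediate, as the formula determines the functional on a spanning set. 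So strictly, the content to verify is: (a) the functional defined by the formula is a state, and (b) it is invariant.

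For (b), the key computational step is to expand $\Delta_q(\eta^{klm})$ using $\Delta_q(\alpha)=\alpha\otimes\alpha-q\gamma^*\otimes\gamma$ and $\Delta_q(\gamma)=\gamma\otimes\alpha+\alpha^*\otimes\gamma$ (hence $\Delta_q(\gamma^*)=\gamma^*\otimes\alpha^*+\alpha\otimes\gamma^*$, using $\gamma$ normal), apply $id\otimes h_q$, and show the result is a scalar multiple of $1$ equal to $h_q(\eta^{klm})\cdot 1$. Concretely one rewrites each term of the expansion back in the basis $\mathcal B$ in the second tensor leg, applies the formula for $h_q$ there, and collects terms; the point is that $h_q$ annihilates every basis element except the "diagonal" ones $\gamma^l{\gamma^*}^l$, which kills the vast majority of cross terms, and a short induction on $k$ (resp. on $l$, $m$) handles the survivors, reproducing the geometric-type recursion $\tfrac{1-q^2}{1-q^{2m+2}}$. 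The right-invariance computation is symmetric. I would also note the standard shortcut: it is enough to check left invariance, since on a compact quantum group the left-invariant state is automatically right-invariant (both equal the unique Haar state), so one of the two symmetric computations can simply be cited.

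For (a) — showing the functional is a genuine state, i.e. positive — the cleanest route is \emph{not} a direct positivity estimate on $\mathcal A$ but rather to invoke existence: by \cite{VanDaele} the Haar state $h_q$ exists and is a state, and by its invariance it must satisfy the recursion that the computation in (b) shows any invariant state satisfies; since that recursion together with $h_q(1)=1$ pins down the values on all of $\mathcal B$, the Haar state \emph{is} the functional given by the formula, so positivity comes for free. This also delivers the uniqueness clause. Finally, faithfulness: for $q\neq 1$ this is a known property of $C(SU_q(2))$ (the algebra is, up to the standard presentation, a Cuntz-algebra-type crossed product on which the canonical trace-like state is faithful, or one cites \cite{WoronowiczSUq2}/\cite{Nagy} directly), while for $q=1$ faithfulness of the normalized Haar measure on the compact group $SU(2)$ is classical. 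The main obstacle is purely bookkeeping: organizing the expansion of $\Delta_q(\alpha^k\gamma^l{\gamma^*}^m)$ and re-expressing the $q$-commutator-scrambled monomials in the ordered basis $\mathcal B$ on the leg to which $h_q$ is applied, so that the surviving diagonal terms can be summed; there is no conceptual difficulty once one commits to tracking the $q$-powers carefully.
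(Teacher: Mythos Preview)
Your proposal is correct in outline, but it takes a genuinely different route from the paper. The paper's own proof is a two-line citation: for $|q|<1$ it refers to \cite[4.3, Theorem~14]{KlimykSchmuedgen} for the formula and to \cite{Nagy} for faithfulness, and for $q=1$ it simply says to integrate against the Haar measure on $SU(2)$ in spherical coordinates. No computation with $\Delta_q$ is performed for this proposition.

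What you are proposing---expand $\Delta_q(\eta^{klm})$, apply the invariance condition, and read off a recursion that pins down $h_q(\eta^{0mm})$---is exactly the method the paper does carry out, but only later and only for $q=-1$ (Section~4), where no reference is available. So your approach would make the present proposition self-contained at the cost of repeating, in $q$-deformed form, a calculation the paper has chosen to outsource. One small organizational point: your step~(a) as written is slightly circular in presentation (you first frame it as ``show the functional is a state'' and then resolve it by ``actually, take the existing Haar state and show it satisfies the formula''); the cleaner phrasing is simply to start from the Haar state, whose existence and positivity are given by \cite{VanDaele}, and compute its values on $\mathcal B$ via invariance---then uniqueness on the dense span is immediate. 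Either way, there is no gap in your argument; it is just more work than the paper does here.
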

\begin{proof}
For $|q|<1$ see \cite[4.3, Theorem 14]{KlimykSchmuedgen1997} for the characterisation and \cite{Nagy1993} for the faithfulness of $h_{q}$. For $q=1$, the Haar integral on $C(SU(2))$ coincides with the Haar state. By applying to the spherical coordinate system, one can then verify the above equalities directly.
\end{proof} 

For $(a,c)\in SU(2)$, we define a representation $\pi_{(a,c)}$ of $C(SU_{-1}(2))$ as follows. If $c=0$, let 
\[
\pi_{(a,0)}: C(SU_{-1}(2))\longrightarrow \mathbb C, \quad \pi_{(a,0)}(\alpha):=a, \ \pi_{(a,0)}(\gamma):=0,
\]
and similarly, set
\[
\pi_{(0,c)}: C(SU_{-1}(2))\longrightarrow \mathbb C,\quad \pi_{(0,c)}(\alpha):=0,\ \pi_{(0,c)}(\gamma):=c
\]
if $a=0$. In all other cases, define
\[
\begin{array}{l}
\pi_{(a,c)}: C(SU_{-1}(2)) \longrightarrow M_2(\mathbb C),\\
\pi_{(a,c)}(\alpha):=\begin{pmatrix}a&0\\0&-a \end{pmatrix}, \ \pi_{(a,c)}(\gamma):=\begin{pmatrix}0&c\\c&0 \end{pmatrix}.
\end{array}
\]
Each of these representations is surjective and hence irreducible. Also note that every character of $C(SU_{-1}(2))$ is of the form $\pi_{(a,c)}$ with $a$ or $c$ being zero.

\begin{proposition}
\label{reps}
Every irreducible representation of $C(SU_{-1}(2))$ is unitarily equivalent to some $\pi_{(a,c)}$.
\end{proposition}
\begin{proof}
Let $\pi:C(SU_{-1}(2))\to \mathcal B(H)$ be an irreducible representation on a Hilbert space $H$ not isomorphic to $\mathbb C$. The relations \eqref{relationsSU-1} imply that $\alpha^{2}$ and $\gamma^{2}$ are central. Hence $\pi(\alpha^{2})$ and $\pi(\gamma^{2})$ are scalar, whereas irreducibility prevents $\pi(\alpha)$ and $\pi(\gamma)$ from being scalar. Thus the spectrum of $\pi(\alpha)$ consists of two distinct points, some $a\in \mathbb C$ with $|a|\leq1$ and its negative. The same holds for the spectrum of $\pi(\gamma)$ and some $c\in \mathbb C$ with $|c|\leq1$. By functional calculus, there are projections $P$, $Q$ in the image of $\pi$ such that
\[
\pi(\alpha) = aP - a(1-P) = 2aP - a1  \quad \text{and}\quad \pi(\gamma) = 2cQ - c1 .
\]
In fact, the image of $\pi$ coincides with $C^{*}(P,Q,1)$, which itself is the image of an irreducible representation of the unital universal $C^{*}$-algebra generated by two projections. This is a 2-subhomogeneous $C^{*}$-algebra \cite[IV.1.4.2]{Blackadar2005}, and $\pi$ turns out to be a surjective $*$-homomorphism $\pi:C(SU_{-1}(2))\rightarrow M_{2}(\mathbb C)$. Since $\alpha$ is normal, we may assume $\pi(\alpha)$ to be of the form
\[
\pi(\alpha)=\begin{pmatrix}a&0\\0&-a\end{pmatrix} .
\]
The fact that $\alpha$ and $\gamma$ anticommute implies that $\pi(\gamma)$ is an off-diagonal matrix. Moreover, as $\pi(\gamma\gamma^{*})$ is scalar, we also get that the off-diagonal entries have the same modulus. Conjugating $\pi$ with a suitable diagonal unitary, we may also assume that
\[
\pi(\gamma)=\begin{pmatrix}0&c\\c&0\end{pmatrix}.
\]
Observe that $\pi(\alpha)$ is left invariant under conjugation with a diagonal unitary. Finally, $\alpha\alpha^{*}+\gamma\gamma^{*}=1$ implies that $(a,c)\in SU(2)$, and thus $\pi=\pi_{(a,c)}$.
\end{proof}

In \cite{Zakrzewski1991}, Zakrzewski showed that $C(SU_{-1}(2))$ can be represented as a sub-$C^{*}$-algebra of $M_{2}(C(SU(2)))$. More generally, he shows that a matrix $2\times2$ quantum group can be considered as a sub-$C^{*}$-algebra of the $C^{*}$-algebra of continuous functions on the classical group with values in $M_{2}(\mathbb C)$. For the convenience of the reader, we shall give an alternative proof for the concrete realisation of $C(SU_{-1}(2))$ using the irreducible representations $\pi_{(a,c)}$.

By the universal property of $C(SU_{-1}(2))$, there is a $*$-homomorphism 
\[
\begin{array}{l}
\phi:C(SU_{-1}(2))\longrightarrow M_{2}(C(SU(2))),\\
\phi(\alpha)((a,c)):=\begin{pmatrix}a & 0\\ 0 & -a\end{pmatrix}, \ \phi(\gamma)((a,c)):=\begin{pmatrix}0 & c \\ c &  0\end{pmatrix}.
\end{array}
\]
Define $G:=\Z/2\Z \oplus  \Z/2\Z$, $r:=(1,0)$, and $s:=(0,1)$, and consider the $G$-action on $C(SU(2))$ given by 
\[
(f\cdot r)(a,c):=f(-a,c)\quad \text{and}\quad (f\cdot s)(a,c) := f(a,-c).
\]
This induces an action $\beta:G\curvearrowright M_{2}(C(SU(2)))$ given by
\[
\beta_r\left(\begin{pmatrix}f&g\\h&k\end{pmatrix}\right):=\begin{pmatrix}k\cdot r&h\cdot r\\g\cdot r&f\cdot r\end{pmatrix} \ \text{and}\
\beta_s\left(\begin{pmatrix}f&g\\h&k\end{pmatrix}\right):=\begin{pmatrix}f\cdot s&-g\cdot s\\-h\cdot s&k\cdot s\end{pmatrix}.
\]

\begin{theorem}
\label{fixed point}
The $*$-homomorphism $\phi$ is injective and maps onto the fixed point algebra of $\beta$.
\end{theorem}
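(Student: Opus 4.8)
The plan is to verify the two assertions separately: first that the image of $\phi$ lies in the fixed point algebra $M_2(C(SU(2)))^{\beta}$, and then that $\phi$ is injective with image \emph{all} of that fixed point algebra. The first part is a direct computation. It suffices to check that the images $\phi(\alpha)$ and $\phi(\gamma)$ are fixed by both $\beta_1$ and $\beta_2$, since these generate $C(SU_{-1}(2))$ and the $\beta_i$ are $*$-homomorphisms. For $\phi(\alpha) = \mathrm{diag}(\alpha,-\alpha)$ one computes $\beta_1(\phi(\alpha)) = \mathrm{diag}(\alpha\cdot s, -\alpha\cdot s)$; here the coordinate function $\alpha$ on $SU(2) \cong S^3$ is $(a,c)\mapsto a$, so $\alpha\cdot s = \alpha$, giving $\beta_1(\phi(\alpha)) = \phi(\alpha)$. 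Similarly $\beta_2(\phi(\alpha)) = \mathrm{diag}(-\alpha\cdot r, \alpha\cdot r)$ and $\alpha\cdot r = -\alpha$, so $\beta_2(\phi(\alpha)) = \mathrm{diag}(\alpha,-\alpha) = \phi(\alpha)$. The analogous check for $\phi(\gamma) = \begin{pmatrix}0&\gamma\\\gamma&0\end{pmatrix}$ uses $\gamma\cdot r = \gamma$ and $\gamma\cdot s = -\gamma$ (where $\gamma$ is the function $(a,c)\mapsto c$), together with the sign changes built into the definitions of $\beta_1,\beta_2$; both work out to fix $\phi(\gamma)$. Thus $\phi$ factors through $M_2(C(SU(2)))^{\beta}$.

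For injectivity I would use Proposition \ref{reps}: a $*$-homomorphism out of $C(SU_{-1}(2))$ is injective as soon as it is faithful on a set of representations separating the ideals, and since $C(SU_{-1}(2))$ is $2$-subhomogeneous it is enough to see that every irreducible $\pi_{(a,c)}$ factors through $\phi$ up to unitary equivalence. For $(a,c)$ with $ac \neq 0$, evaluation of $\phi(\alpha),\phi(\gamma)$ at the point $(a,c) \in SU(2)$ reproduces exactly the matrices defining $\pi_{(a,c)}$; evaluation at $(a,0)$ yields $\mathrm{diag}(a,-a)$ and the zero matrix, which contains $\pi_{(a,0)}$ (and $\pi_{(-a,0)}$) as a subrepresentation, and similarly at $(0,c)$ the matrix $\begin{pmatrix}0&c\\c&0\end{pmatrix}$ is unitarily equivalent to $\mathrm{diag}(c,-c)$, exhibiting $\pi_{(0,c)}$ and $\pi_{(0,-c)}$. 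Hence $\ker\phi$ is contained in the kernel of every irreducible representation, so $\ker\phi = 0$.

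The main work is surjectivity onto $M_2(C(SU(2)))^{\beta}$. Here I would argue fiberwise. Write a generic element of $M_2(C(SU(2)))$ as $\begin{pmatrix}f&g\\h&k\end{pmatrix}$; the fixed point conditions under $\beta_1$ and $\beta_2$ translate into the relations $f\cdot s = f$, $k\cdot s = k$, $g\cdot s = -g$, $h\cdot s = -h$ (from $\beta_1$), together with $k\cdot r = f$, $g\cdot r = h$ (from $\beta_2$). So the fixed point algebra is parametrized by a pair $(f,g)$ with $f$ invariant under the flip $c\mapsto -c$ and anti-invariant under... wait, $f\cdot s = f$ means $f$ is invariant in the $c$-variable, and $k$ is determined by $k = f\cdot r$, i.e. $f(-a,c)$; meanwhile $g$ satisfies $g\cdot s = -g$ and $h = g\cdot r$. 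On the other hand, the image of $\phi$ consists of the $C^*$-subalgebra generated by $\mathrm{diag}(\alpha,-\alpha)$ and $\begin{pmatrix}0&\gamma\\\gamma&0\end{pmatrix}$, and one should show any invariant $(f,g)$ can be approximated by noncommutative polynomials in $\alpha,\gamma,\alpha^*,\gamma^*$ placed into this block form. The cleanest route is to identify, inside $M_2(C(SU(2)))^{\beta}$, the diagonal part and the off-diagonal part, note that $C^*(\phi(\alpha))$ produces all diagonal matrices $\mathrm{diag}(p,p\cdot r)$ with $p$ even in $c$ — here I would invoke Stone–Weierstrass on $SU(2)$, using that $\alpha,\alpha^*$ separate points of $\{(a,c):\text{mod out by }c\mapsto -c\}$ — and then check that multiplying by the off-diagonal generator $\phi(\gamma)$ moves between the diagonal and off-diagonal pieces so that together they fill out the whole fixed point algebra. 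I expect this Stone–Weierstrass step, and in particular being careful about exactly which functions are separated by $\alpha$ and $\gamma$ (they see $a$ only up to sign on the locus $c=0$, etc.), to be the technical crux; the $G$-action bookkeeping is routine but must be done precisely.
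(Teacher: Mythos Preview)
Your injectivity argument and the verification that $\phi(\alpha),\phi(\gamma)$ are $\beta$-fixed are exactly what the paper does. The surjectivity sketch, however, contains a genuine error. You claim that $C^{*}(\phi(\alpha))$ produces all diagonal matrices $\operatorname{diag}(p,\,p\cdot r)$ with $p$ even in $c$, justified by ``$\alpha,\alpha^{*}$ separate points of $SU(2)$ modulo $c\mapsto -c$''. This is false: the coordinate function $\alpha$ depends only on $a$, so $\alpha,\alpha^{*}$ cannot distinguish $(a,c)$ from $(a,e^{i\theta}c)$ for any $\theta$, and the resulting quotient is much coarser than $SU(2)/\langle s\rangle$. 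Concretely, $\operatorname{diag}(\gamma^{2},\gamma^{2})$ lies in the diagonal part of the fixed point algebra but is not in $C^{*}(\phi(\alpha))$. Your strategy can be repaired by throwing in the diagonal elements $\phi(\gamma)^{2}$ and $\phi(\gamma)\phi(\gamma)^{*}$ before applying Stone--Weierstrass (then $\alpha,\bar\alpha,\gamma^{2},\bar\gamma^{2}$ do separate $SU(2)/\langle s\rangle$), but as written the argument does not go through.

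The paper avoids Stone--Weierstrass entirely. It decomposes an arbitrary $x\in M_{2}(C(SU(2)))$ along the basis $1,v_{r},v_{s},v_{r}v_{s}$ of $M_{2}(\mathbb C)$, obtaining four scalar functions, and observes that $\beta$-invariance forces each of these functions to lie in one of the four joint $(\pm,\pm)$-eigenspaces for the $r,s$-action on $C(SU(2))$. Since the monomials $\eta^{klm}\in\mathcal B$ have definite parities ($(-1)^{k}$ under $r$, $(-1)^{l+m}$ under $s$) and span a dense subalgebra, each eigenspace is the closed span of the monomials with the matching parity, and a direct computation shows that $\phi$ applied to the noncommutative monomial $\eta^{klm}\in C(SU_{-1}(2))$ lands in exactly the right matrix block with the commutative $\eta^{klm}$ as entry. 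This parity-by-parity matching is cleaner than a Stone--Weierstrass argument and sidesteps the separation-of-points bookkeeping you flagged as delicate.
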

\begin{proof}
Let $\op{ev}_{(a,c)}\in C(SU(2))$ be the evaluation at $(a,c)\in SU(2)$. For non-zero $a$ and $c$, 
\[
\op{ev}_{(a,c)}\circ\phi=\pi_{(a,c)}. 
\]
On the other hand, for $a,c\in \mathbb T$,
\[
\op{ev}_{(a,0)}\circ\phi=\pi_{(a,0)}\oplus\pi_{(-a,0)}\quad \text{and}\quad \op{ev}_{(0,c)}\circ \phi=\op{Ad}(v)\circ \pi_{(0,c)}\oplus \pi_{(0,-c)},
\]
where $v\in M_2(\mathbb C)$ is the symmetry given by 
\[
v:=\frac{1}{\sqrt{2}}\cdot \begin{pmatrix}1&1\\1&-1 \end{pmatrix}.
\]
For $0\neq x\in C(SU_{-1}(2))$, we therefore find $(a,c)\in SU(2)$ such that $\op{ev}_{(a,c)}\circ\phi(x)\neq0$. This shows that $\phi$ is injective. 

It is obvious that the image of $\phi$ is contained in the fixed point algebra of $\beta$. For the other inclusion, note that every element in $x\in M_{2}(SU(2))$ admits a unique decomposition
\[
x=\begin{pmatrix}f&0\\0&f \end{pmatrix}+\begin{pmatrix}g&0\\0&-g \end{pmatrix}+\begin{pmatrix}0&h\\h&0 \end{pmatrix}+\begin{pmatrix}0&k\\-k&0 \end{pmatrix}
\]
for some $f,g,h,k \in C(SU(2))$. One easily verifies that $x$ is fixed by $\beta_{1}$ and $\beta_{2}$ if and only if this holds for each summand. In this case,
\[
\begin{array}{lcl}
f = f\cdot r = f\cdot s, & & g = -g\cdot r = g\cdot s,\\
h = h\cdot r = -h\cdot s, & & k = -k\cdot r = -k\cdot s.
\end{array}
\]
Note that these relations are induced by (different) $G$-actions on $C(SU(2))$. Hence, the coefficient functions can be approximated by linear combinations of elements in $\mathcal B_1$ satisfying the same symmetry relations. It follows that $C(SU_{-1}(2))$ is mapped onto the fixed point algebra of $\beta$.
\end{proof}

\section{\texorpdfstring{The spectrum of $C(SU_{-1}(2))$}{Spectrum}}
\label{spectrum}

In this section, we determine the spectrum of $C(SU_{-1}(2))$. As a 2-sub\-homogeneous $C^{*}$-algebra is GCR, the canonical surjection
\[
C(SU_{-1}(2))\widehat{\quad}\longrightarrow  \operatorname{Prim}(C(SU_{-1}(2)))
\]
is injective \cite[IV.1.3.6 and IV.1.3.7]{Blackadar2005}. Hence, with this identification, the topology on the spectrum on $C(SU_{-1}(2))$ coincides with the  Jacobson topology on $\operatorname{Prim}(C(SU_{-1}(2)))$. Denote by
\[
X := SU(2)/\sim
\]
the quotient space by the following equivalence relation. If $(a,c)\in SU(2)$ satisfies $ac\neq0$, then
\[
(a,c) \sim (b,d)\Leftrightarrow (b,d)\in G\cdot(a,c).
\]
If $a$ or $c$ is zero, the equivalence class of $(a,c)$ only consists of one element. We write
\[
P:SU(2)\longrightarrow X
\]
for the canonical projection and endow $X$ with the final topology with respect to $P$. The map
\[
\Pi : X \longrightarrow C(SU_{-1}(2))\widehat{\quad}, \quad [(a,c)] \mapsto [\pi_{(a,c)}] 
\]
is a bijection. To see this, consider the symmetries $v_0,v_1\in M_{2}(\mathbb C)$ given by
\begin{align}
\label{symmetries}
v_0:=\begin{pmatrix}1&0\\0&-1 \end{pmatrix}\quad \text{and}\quad v_1 := \begin{pmatrix}0&1\\1&0 \end{pmatrix}.
\end{align}
Every two-dimensional representation $\pi_{(a,c)}$ gets intertwined with $\pi_{(-a,c)}$ by $v_1$, and similarly $v_0$ intertwines $\pi_{(a,c)}$ and $\pi_{(a,-c)}$. Thus, we also get the equivalence of $\pi_{(a,c)}$ and $\pi_{(-a,-c)}$. This shows that $\Pi$ is well-defined and surjective. One can check that for $[(a,c)]\neq [(b,d)]\in X$ either
\[
\op{det}(\pi_{(a,c)}(\alpha))\neq \op{det}(\pi_{(b,d)}(\alpha)),
\]
or
\[
\op{det}(\pi_{(a,c)}(\gamma))\neq \op{det}(\pi_{(b,d)}(\gamma)),
\]
where $\op{det}$ denotes the respective determinant. This yields that $\Pi$ is injective.

\begin{theorem} 
The bijection $\Pi$ is a homeomorphism of $X$ onto the spectrum of $C(SU_{-1}(2))$.
\end{theorem}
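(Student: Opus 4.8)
The plan is to show that $\Pi$ is both continuous and closed; since it is a bijection, this makes it a homeomorphism. Throughout I will use Theorem~\ref{fixed point}, which identifies $C(SU_{-1}(2))$ with the fixed point algebra $M_{2}(C(SU(2)))^{\beta}$ and hence allows me to write each $x\in C(SU_{-1}(2))$ as a pair $(\psi,\omega)$ of continuous functions on $SU(2)$ with $\psi(a,-c)=\psi(a,c)$ and $\omega(a,-c)=-\omega(a,c)$. Spelling out the representations $ev_{(a,c)}\circ\phi$ computed in the proof of Theorem~\ref{fixed point} one gets $\pi_{(a,c)}(x)=\left(\begin{smallmatrix}\psi(a,c)&\omega(a,c)\\\omega(-a,c)&\psi(-a,c)\end{smallmatrix}\right)$ when $ac\neq0$, $\pi_{(a,0)}(x)=\psi(a,0)$ when $|a|=1$, and $\pi_{(0,c)}(x)=\psi(0,c)+\omega(0,c)$ when $|c|=1$.

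First I would prove continuity of $\Pi$. Since $p\colon SU(2)\to X$ is a quotient map and the closed subsets of the spectrum are exactly the hulls $\operatorname{hull}(I)$ of closed two-sided ideals $I\trianglelefteq C(SU_{-1}(2))$, it suffices to show that $\{(a,c)\in SU(2):I\subseteq\ker\pi_{(a,c)}\}$ is closed in $SU(2)$ for every such $I$. Let $(a_{n},c_{n})\to(a,c)$ with $I\subseteq\ker\pi_{(a_{n},c_{n})}$ for all $n$, and fix $x\in I$. If $ac\neq0$, then eventually $a_{n}c_{n}\neq0$, and $\pi_{(a,c)}(x)=ev_{(a,c)}(\phi(x))=\lim_{n}ev_{(a_{n},c_{n})}(\phi(x))=0$ by continuity of the matrix-valued function $\phi(x)$. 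If $c=0$ (the case $a=0$ being symmetric), pass to a subsequence consisting either entirely of interior points, in which case $ev_{(a,0)}(\phi(x))=\lim_{n}ev_{(a_{n},c_{n})}(\phi(x))=0$ and this matrix equals $\operatorname{diag}(\pi_{(a,0)}(x),\pi_{(-a,0)}(x))$, so that $\pi_{(a,0)}(x)=0$; or entirely of points of the circle $\{c=0\}$, in which case $\pi_{(a_{n},0)}(x)=0$ and the character-valued map $a\mapsto\pi_{(a,0)}$ is weak-$*$ continuous (check it on $\alpha$ and $\gamma$, then use density), so that again $\pi_{(a,0)}(x)=0$. In all cases $I\subseteq\ker\pi_{(a,c)}$, as required.

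For closedness, let $C\subseteq X$ be closed, put $\widetilde{C}:=p^{-1}(C)$, a closed subset of $SU(2)$ that is invariant under the $G$-action on $\{ac\neq0\}$, and set $I:=\{x:\pi_{(a,c)}(x)=0\text{ for all }[(a,c)]\in C\}$. A direct computation gives $\operatorname{hull}(I)=\overline{\Pi(C)}$, so it is enough to prove that whenever $(b,d)\notin\widetilde{C}$ there exists $x\in I$ with $\pi_{(b,d)}(x)\neq0$. I would construct such an $x=(\psi,\omega)$ with Urysohn's and Tietze's theorems on the compact metric space $SU(2)$. Note first that $(b,d)$ cannot be a limit of interior points of $\widetilde{C}$ either — such a sequence would, by $G$-invariance on the interior and closedness of $\widetilde{C}$, force $(b,d)\in\widetilde{C}$ — so one can separate a $c$-symmetric open neighbourhood of the relevant $G$-orbit of $(b,d)$ from $\widetilde{C}$. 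A Urysohn function for this separation, symmetrized in $c$, gives a $c$-even $\psi$ vanishing on $\widetilde{C}\cap\{ac\neq0\}$ and on $\widetilde{C}\cap\{c=0\}$ and nonzero at $(b,d)$ provided $b\neq0$; one then picks the $c$-odd function $\omega$, by extending prescribed values off a $c$-invariant closed set via Tietze, so that $\omega=0$ on $\widetilde{C}\cap\{ac\neq0\}$ and $\psi(0,c)+\omega(0,c)=0$ on $\widetilde{C}\cap\{a=0\}$. When $(b,d)$ lies on one of the two boundary circles one arranges $\psi(b,0)\neq0$, respectively $\psi(0,d)+\omega(0,d)\neq0$, instead. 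Verifying $x\in I$ and $\pi_{(b,d)}(x)\neq0$ is then a routine check against the formulas of the first paragraph.

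The step I expect to be the main obstacle is exactly this last construction. The constraints that $\psi$ be even and $\omega$ odd in $c$ interact with the fact that $\widetilde{C}$ need only be $G$-invariant in the interior — so a boundary point of $\widetilde{C}$ on the circle $\{a=0\}$ need not have its mirror image in $\widetilde{C}$ — and one has to keep careful track of these parities under the $c$-symmetrizations, splitting into a few cases according to the location of $(b,d)$. The other ingredients — the explicit matrix entries of $\pi_{(a,c)}(x)$, the identity $\operatorname{hull}(I)=\overline{\Pi(C)}$, and the weak-$*$ continuity of the boundary characters — are immediate from the definitions and from Theorem~\ref{fixed point}.
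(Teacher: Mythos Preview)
Your approach is essentially the paper's: both arguments show that $\Pi$ respects closures by constructing, for each $(b,d)\notin\widetilde{C}$, an element of the fixed point algebra that vanishes on $\widetilde{C}$ but not at $(b,d)$, using Urysohn/Tietze together with the symmetry constraints coming from $\beta$. Your two-function parametrization $(\psi,\omega)$ with $\psi$ even and $\omega$ odd in $c$ is exactly the $\beta_{2}$-fixed form $\left(\begin{smallmatrix}\psi&\omega\\\omega\cdot r&\psi\cdot r\end{smallmatrix}\right)$ used implicitly in the paper, and your explicit formulas for $\pi_{(a,c)}(x)$ make the verification cleaner. The one place the paper is slicker is precisely the boundary case you flag as the obstacle: rather than building $\psi$ and $\omega$ directly when $b=0$ (where $(0,-d)$ may lie in $\widetilde{C}$, forcing $\psi(0,d)=\omega(0,d)$ and making your Urysohn-then-symmetrize recipe delicate), the paper invokes the automorphism of $C(SU_{-1}(2))$ swapping $\alpha\leftrightarrow\gamma$ to reduce to the already-treated case $d=0$.
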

\begin{proof}
Recall the relationship between $\op{ev_{(a,c)}}$ and $\pi_{(a,c)}$ under the identification via $\phi$ described in the proof of Theorem \ref{fixed point}.
Let $M\subseteq X$ and define $N:=\Pi(M)\subseteq C(SU_{-1}(2))^{\widehat{\quad}}$. We show that $\Pi(\overline{M})=\overline{N}$. Let $x\in C(SU_{-1}(2))\subseteq C(SU(2),M_{2}(\mathbb C))$ be an element satisfying $\pi_{(a,c)}(x)=0$ for all $(a,c)\in P^{-1}(M)\subseteq SU(2)$. Since the coefficient functions of $x$ are continuous, $\pi_{(a,c)}(x)=0$ for all $(a,c)\in \overline{P^{-1}(M)}=P^{-1}(\overline{M})$. Hence, $\Pi(\overline{M})\subseteq \overline{N}$.

For the other implication, assume that $\emptyset\neq \overline{M}\neq X$ and take an arbitrary element $[(b,d)]\in X\setminus\overline{M}$. We show that $\Pi([(b,d)])\notin \overline{N}$. Suppose first that $b$ and $d$ are non-zero. Denote by $Y\subseteq SU(2)$ the $G$-Orbit of $P^{-1}(\overline{M})\cup \left\lbrace(b,d)\right\rbrace$. Let $y\in C(Y,M_{2}(\mathbb C))$ be given by
\[
y_{|G\cdot P^{-1}(\overline{M})} = 0\quad \text{and}\quad y_{|G\cdot (b,d)} = \begin{pmatrix}1&0\\0&1 \end{pmatrix}.
\]
Obviously, $y$ lies in the fixed point algebra of the $G$-action on $C(Y,M_{2}(\mathbb C))$ induced by $\beta$. Find a $\beta$-invariant lift $x\in C(SU_{-1}(2))\subseteq C(SU(2),M_{2}(\mathbb C))$ for $y$. By construction, $x\in \operatorname{ker}(\pi_{(a,c)})$ for every $(a,c)\in P^{-1}(\overline{M})$, but $x\notin \operatorname{ker}(\pi_{(b,d)})$. If $d=0$ and $(-b,0)\notin P^{-1}(\overline{M})$, then we can proceed as before. Let us therefore assume that $d=0$ and $(-b,0)\in P^{-1}(\overline{M})$. Define the closed subset
\[
Y := \left\lbrace (a,\pm c)\in SU(2)\ : \ (a,c)\in P^{-1}(\overline{M}) \right\rbrace,
\]
and let $f\in C(SU(2))$ be a function with $f_{|Y}=0$ and $f(b,0)=1$. Now,
\[
x := \begin{pmatrix}\frac{1}{2} (f+f\cdot s)&0\\0&\frac{1}{2} (f+f\cdot s)\cdot r\end{pmatrix} \in C(SU_{-1}(2))
\]
satisfies $\pi_{(b,0)}(x)=1$ and $\pi_{(a,c)}(x)=0$ for all $(a,c)\in P^{-1}(\overline{M})\subseteq Y$. If $b=0$, we can use a similar argument. We have shown that $\Pi([(b,d)])\notin \overline{N}$, and the proof is complete.
\end{proof}

\section{\texorpdfstring{$K$-Theory of $C(SU_{-1}(2))$}{K-theory}}
\label{K-theory}

Consider the following closed subsets of $SU(2)$:
\[
\begin{array}{lcl}
X_{1} & := & \left\lbrace (1,0), (0,1)\right\rbrace, \\
X_{2} & := & \left\lbrace (a,c)\in SU(2) \ : \ \op{Im}(a)  =  \op{Im}(c) =  0  \right\rbrace, \\
X_{3} & := & \left\lbrace (a,c)\in SU(2) \ :\   \op{Im}(a),\, \op{Im}(c) \geq 0 \ \text{and}\ \op{Im}(a)\cdot \op{Im}(c) = 0 \right\rbrace, \\
X_{4} & := & \left\lbrace (a,c)\in SU(2) \ : \ \op{Im}(a),\, \op{Im}(c) \geq 0  \right\rbrace.
\end{array}
\]
These subsets obviously define a filtration of $SU(2)$
\[
X_1\subseteq X_2\subseteq X_3\subseteq X_3\subseteq SU(2),
\]
and the induced restriction homomorphisms give rise to a cofiltration of $C^*$-algebras
\[
	\xymatrix{
	 C(SU_{-1}(2)) \ar@{->>}[r]^/1em/{\pi_4} & A_4 \ar@{->>}[r]^{\pi_3} &  A_3 \ar@{->>}[r]^{\pi_2} & A_2 \ar@{->>}[r]^{\pi_1} &  A_{1}.
	}
\]
In order to compute the $K$-theory of $C(SU_{-1}(2))$, we successively compute $K_*(A_{k+1})$ using $K_*(A_k)$ and the six-term exact sequence associated with $\pi_k$. Observe that every coefficient function of an element in the fixed point algebra of $\beta$ is uniquely determined by its values on $X_{4}$. Hence, $\pi_{4}$ is an isomorphism, and we only have to determine $K_*(A_4)$. For any subset $M\subset SU(2)$, we extend the notation of the last section and write $f\cdot r$ and $f\cdot s$ for $f\in C(M)$, whenever it makes sense.

Recall the symmetries $v_0$, $v_1\in M_2(\mathbb C)$ defined in Section \ref{spectrum}. We have that
\[
A_1 \cong C^{*}(v_0) \oplus C^{*}(v_1) \cong  \mathbb C^{2} \oplus \mathbb C^{2},
\]
and
\[
A_{2} = \left\lbrace \begin{pmatrix}f&g\\g\cdot r&f\cdot r\end{pmatrix}\in M_{2}(C(X_{2})) \ : \ f=f\cdot s,\, g=-g\cdot s \right\rbrace.
\]
An element in $A_2$ is already uniquely determined by its values on the compact subset of $X_{2}$ consisting of all $(a,c)$ where $\op{Re}(a)$ and $\operatorname{Re}(c)$ are non-negative. Using a homeomorphism between this subset and $[0,1]$ sending $(1,0)$ to $0$ and $(0,1)$ to $1$, we get an isomorphism between $A_2$ and
\[
A_2\stackrel{\cong}{\longrightarrow } \left\lbrace f\in C([0,1],M_{2}(\mathbb C)) \ : \ f(0)\in C^{*}(v_0),\, f(1) \in C^{*}(v_1) \right\rbrace =:C.
\]

\begin{lemma} 	
\label{A2}
We have that $K_{0}(C)\cong\mathbb Z^{3}$ and $K_{1}(C)=0$. Moreover, $K_{0}(C)$ is generated by $[p_{0}], [q_{0}]$, and $[1]$, where for $t\in [0,1]$, the projections $p_{0}(t), q_{0}(t)\in M_{2}(\mathbb C)$ are given by
\[
p_{0}(t) := \begin{pmatrix}1 - \frac{t}{2}&\sqrt{\frac{t}{2} - \frac{t^2}{4}}\\\sqrt{\frac{t}{2} - \frac{t^2}{4}}&\frac{t}{2} \end{pmatrix} \quad \text{and} \quad  q_{0}(t) := \begin{pmatrix}\frac{t}{2}&\sqrt{\frac{t}{2} - \frac{t^2}{4}}\\\sqrt{\frac{t}{2} -  \frac{t^2}{4}}&1 - \frac{t}{2} \end{pmatrix}.
\]
\end{lemma}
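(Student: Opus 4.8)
The plan is to compute $K_*(C)$ via the standard mapping-cone / restriction six-term exact sequence for the ideal $I := \{f \in C : f(0) = f(1) = 0\} \cong C_0((0,1), M_2(\mathbb C)) \cong S M_2(\mathbb C)$, with quotient $C/I \cong C^*(v_r) \oplus C^*(v_s) \cong \mathbb C^2 \oplus \mathbb C^2$. Since $K_0(S M_2(\mathbb C)) = 0$ and $K_1(S M_2(\mathbb C)) = \mathbb Z$, while $K_0(\mathbb C^4) = \mathbb Z^4$ and $K_1(\mathbb C^4) = 0$, the six-term sequence collapses to
\begin{align*}
0 \ \longrightarrow \ K_1(C) \ \longrightarrow \ \mathbb Z \ \stackrel{\partial}{\longrightarrow} \ \mathbb Z^4 \ \longrightarrow \ K_0(C) \ \longrightarrow \ 0 \ ,
\end{align*}
so everything hinges on the index/exponential map $\partial \colon K_1(\mathbb C^4) \to$ wait --- more precisely the relevant connecting map is the exponential map $\delta \colon K_0(\mathbb C^4) \to K_1(SM_2(\mathbb C)) = \mathbb Z$ together with the index map $K_1(\mathbb C^4) = 0 \to K_0(SM_2(\mathbb C)) = 0$. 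Hence $K_1(C) = \ker \delta$ and $K_0(C) = \operatorname{coker}\delta = \mathbb Z^4 / \operatorname{im}\delta$. So the real content is: identify $\delta \colon \mathbb Z^4 \to \mathbb Z$.

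To compute $\delta$, I would work generator by generator on $K_0(\mathbb C^2 \oplus \mathbb C^2)$. Write the four minimal projections as $e_1, e_2 \in C^*(v_r)$ (the spectral projections of $v_r$ for eigenvalues $+1, -1$) and $f_1, f_2 \in C^*(v_s)$. For each such projection $p$ at an endpoint one chooses a continuous path of self-adjoint lifts in $M_2(\mathbb C)$ over $[0,1]$ interpolating between the prescribed value at that endpoint and $0$ (or the whole identity, or another projection) at the other endpoint, and $\delta[p]$ is the winding number of the resulting unitary $e^{2\pi i a(t)}$ in $SM_2(\mathbb C)$. The cleanest route is to observe that $[1_C] = [e_1] + [e_2]$ read at $t=0$ and also $= [f_1] + [f_2]$ read at $t=1$, and that the constant projection $1_C \in C$ is genuinely a projection in $C$, so $\delta([e_1] + [e_2]) = 0$ and $\delta([f_1]+[f_2]) = 0$. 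It therefore suffices to compute $\delta[e_1]$ and $\delta[f_1]$, and one expects $\delta[e_1] = \pm 1$ and $\delta[f_1] = \pm 1$ (possibly equal, possibly opposite, which is the crux). A direct way is to exhibit the explicit projection-valued paths: note that $p_0$ above satisfies $p_0(0) = e_1 = \operatorname{diag}(1,0)$ and $p_0(1) = \tfrac12 \begin{pmatrix}1 & 1 \\ 1 & 1\end{pmatrix} = f_1$, so $p_0$ is a genuine projection in $C$ connecting $e_1$ at $0$ to $f_1$ at $1$; similarly $q_0(0) = \operatorname{diag}(0,1) = e_2$ and $q_0(1) = \tfrac12\begin{pmatrix}1 & -1 \\ -1 & 1\end{pmatrix}$, which is the eigenprojection of $v_s$ for $-1$, i.e. $f_2$. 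Hence $[p_0]$ and $[q_0]$ are classes in $K_0(C)$ mapping to $[e_1] + 0$ and $[e_2] + 0$ on the $C^*(v_r)$ side, and to $[f_1], [f_2]$ on the $C^*(v_s)$ side --- this shows $\delta[e_i] = \delta[f_i]$ for $i = 1,2$ in a suitable identification of the two copies of $\mathbb Z^2$, i.e. the image of $\delta$ is contained in (and, by a rank count, equals) the antidiagonal-type $\mathbb Z \subset \mathbb Z^4$. Concretely, using $[1]$, $[e_1]$, $[f_1]$ as coordinates and the relations just found, $\operatorname{coker}\delta \cong \mathbb Z^3$ with basis $[1], [p_0], [q_0]$ and $\ker\delta = 0$.

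Assembling: from $[p_0]$ and $[q_0]$ being honest projections in $C$ and from $[1_C] = [p_0] + [q_0]$ failing in $K_0$ (rather the three classes being independent), one reads off $K_0(C) = \mathbb Z^3$ generated by $[1], [p_0], [q_0]$, and $\ker\delta = 0$ gives $K_1(C) = 0$. To nail down independence of the three generators and that nothing else survives, I would apply the evaluation maps: $\operatorname{ev}_0 \colon C \to C^*(v_r) \cong \mathbb C^2$ and $\operatorname{ev}_1 \colon C \to C^*(v_s) \cong \mathbb C^2$ send $([1],[p_0],[q_0])$ to $((1,1),(1,0),(0,1))$ and $((1,1),(1,0),(0,1))$ respectively; combined with the exact sequence this forces the rank to be exactly $3$ and pins the generators.

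The main obstacle is the bookkeeping of the exponential map $\delta$: one must genuinely verify that the exact sequence forces surjectivity of $K_0(C) \to K_0(\mathbb C^4)$ onto the diagonal-with-sign sublattice and that $K_1(C)$ vanishes, which amounts to checking that $\delta$ is surjective onto a rank-one subgroup with the stated cokernel. I expect the cleanest presentation avoids computing winding numbers by hand altogether: instead, use that $p_0, q_0, 1$ are manifestly projections in $C$ giving three classes, prove they are linearly independent in $K_0(C)$ via the two endpoint evaluations, and then use the six-term sequence purely for an upper bound --- $\operatorname{rank} K_0(C) \le 4 - \operatorname{rank}(\operatorname{im}\delta)$ and $K_1(C) = \ker\delta$ --- together with the single computation that $\operatorname{im}\delta$ has rank at least $1$ (e.g. because $[e_1] \in K_0(\mathbb C^4)$ is not in the image of $K_0(\operatorname{ev}_0) \oplus K_0(\operatorname{ev}_1)$ applied to any projection in $C$, as such a projection would have locally constant rank hence rank $1$ throughout, forcing its value at $t=1$ to be a rank-one projection in $C^*(v_s)$ --- consistent, so one instead argues via the trace/determinant of the path), forcing $K_0(C) = \mathbb Z^3$ and $K_1(C) = 0$.
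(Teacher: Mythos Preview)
Your approach is the same as the paper's: analyse the six-term sequence of
\[
0 \longrightarrow C_0((0,1),M_2(\mathbb C)) \longrightarrow C \xrightarrow{ev_0\oplus ev_1} C^*(v_r)\oplus C^*(v_s) \longrightarrow 0.
\]
However, the write-up contains two concrete errors and one genuine gap.

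\textbf{Errors.} First, you have the roles of $K_0$ and $K_1$ reversed. From the sequence one gets
\[
0 \to K_0(C) \to \mathbb Z^4 \xrightarrow{\ \delta\ } \mathbb Z \to K_1(C) \to 0,
\]
so $K_0(C)=\ker\delta$ and $K_1(C)=\operatorname{coker}\delta$, not the other way around. Your later paragraphs implicitly use the correct identification (you produce projections in $C$ and push them into $\mathbb Z^4$), so this is a labeling slip, but the sentence ``$\ker\delta=0$ gives $K_1(C)=0$'' is nonsensical as written. Second, you miscompute $q_0(1)$: in fact
\[
q_0(1)=\begin{pmatrix}\tfrac12&\tfrac12\\ \tfrac12&\tfrac12\end{pmatrix}=p_0(1),
\]
the $+1$-eigenprojection of $v_s$, not the $-1$-eigenprojection. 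So under $ev_0\oplus ev_1$ the triple $([1],[p_0],[q_0])$ goes to $(e_1+e_2+e_3+e_4,\ e_1+e_3,\ e_2+e_3)$, not to what you wrote. (These three vectors are still $\mathbb Z$-independent, so your independence claim survives, but for the wrong reason.)

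\textbf{Gap.} Knowing three independent classes in $\ker\delta\subset\mathbb Z^4$ only gives $\operatorname{rank}K_0(C)\ge 3$ and $\operatorname{rank}K_1(C)\le 1$. To finish you must show (i) $\delta$ is surjective, hence $K_1(C)=0$, and (ii) your three classes generate $\ker\delta$, not just a finite-index sublattice. Your final paragraph acknowledges this but the proposed workaround (``such a projection would have locally constant rank \dots\ consistent, so one instead argues via the trace/determinant'') does not land. The paper closes this gap by doing exactly the computation you tried to avoid: it lifts each of the four minimal projections $e_j$ to an explicit self-adjoint $f_j\in C$ and reads off $\delta(e_j)=[\exp(2\pi i f_j)]=\pm[z]$ directly. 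This single honest computation gives surjectivity of $\delta$ and an explicit basis $e_1+e_3,\ e_1-e_2,\ e_3-e_4$ of $\ker\delta$, which is then matched with $[p_0],\ [p_0]-[q_0],\ [p_0]-[1-q_0]$. You should do the same: compute $\delta$ on at least one generator to see it hits a generator of $\mathbb Z$, and then check your three classes generate the full kernel rather than an index-$2$ sublattice.
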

\begin{proof}
The kernel of $\op{ev}_{0} \oplus  \op{ev}_{1}: C\to C^{*}(v_0) \oplus C^{*}(v_1)$ is $C_{0}((0,1),M_{2}(\mathbb C))$, and we get the following six-term exact sequence
\[
	\xymatrix{
	0 \ar[r] & K_{0}(C) \ar[r] & K_{0}(C^{*}(v_0)) \oplus  K_{0}(C^{*}(v_1)) \cong \mathbb Z^{4} \ar[d]^{\rho} \\					
	0 \ar[u] & K_{1}(C) \ar[l] & K_{1}(C_{0}((0,1))) \cong \mathbb Z \ar[l]
			}
\]
As generators for $K_{0}(C^*(v_0))\oplus K_{0}(C^*(v_1))$, we may choose
\[
\begin{array}{lcl}
e_{1} := \left(\left[\begin{pmatrix}1&0\\0&0\end{pmatrix}\right], 0\right), & & e_{3} := \left( 0, \left[\frac{1}{2}\cdot\begin{pmatrix}1&1\\1&1\end{pmatrix}\right] \right),\\
e_{2} := \left( \left[\begin{pmatrix}0&0\\0&1\end{pmatrix} \right], 0\right), & & e_{4} := \left( 0, \left[\frac{1}{2}\cdot\begin{pmatrix}1&-1\\-1&1\end{pmatrix}\right]\right).
\end{array}
\]
To compute the images of these elements under the exponential map $\rho$, set
\[
\begin{array}{lcl}
f_{1}(t) := (1-t)\cdot\begin{pmatrix}1&0\\0&0\end{pmatrix}, & &   
f_{3}(t) := \frac{t}{2}\cdot\begin{pmatrix}1&1\\1&1\end{pmatrix}, \\
f_{2}(t) := (1-t)\cdot\begin{pmatrix}0&0\\0&1\end{pmatrix}, & & 
f_{4}(t) := \frac{1}{2}\cdot\begin{pmatrix}1&-1\\-1&1\end{pmatrix}
\end{array}
\]
for $t\in [0,1]$. For $1\leq j\leq4$, $\rho(e_{j})=\left[ \, \exp(2\pi i f_{j}) \, \right]$, and
\[
\rho(e_{1}) = \rho(e_{2}) =  -\rho(e_{3}) =  -\rho(e_{4}) =  -[z],
\]
where $z\in C(\mathbb T)$ denotes the identity map on $\mathbb T$. Thus, $\rho$ is surjective, and
\[
\op{ker}(\rho) = \langle e_{1}+e_{3},\,  e_{1}-e_{2},\,  e_{3}-e_{4} \rangle.
\]
Moreover,
\[
\begin{array}{lcl}
K_{0}(\op{ev}_0\oplus \op{ev}_1)([p_{0}]) = e_{1}+e_{3} , & & \hspace{-1.25cm} K_{0}(\op{ev}_0\oplus \op{ev}_1)([p_0]-[q_0]) = e_{1}-e_{2}, \\
K_0(\op{ev}_0\oplus \op{ev}_1)([p_0]+[q_0]-[1]) = e_3-e_4.
\end{array}
\]
The claim now follows from exactness of the above six-term exact sequence.
\end{proof}

Let us denote by $\tilde{p}$, $\tilde{q}\in A_{2}$ the unique extensions of $p_{0}$, $q_{0}\in C$. Note that these projections satisfy the relations
\[
\begin{array}{lcl}
\tilde{p}(1,0) = \tilde{q}(-1,0)=\begin{pmatrix}1&0\\0&0\end{pmatrix}, & & \tilde{p}(0,1) = \tilde{q}(0,1) = \frac{1}{2}\cdot\begin{pmatrix} 1 & 1 \\ 1 & 1 \end{pmatrix},\\
\tilde{p}(-1,0) = \tilde{q}(1,0) = \begin{pmatrix}0&0\\0&1\end{pmatrix}, & & \tilde{p}(0,-1) = \tilde{q}(0,-1) =\frac{1}{2}\begin{pmatrix}1&-1\\-1&1\end{pmatrix}.
\end{array}
\]

In order to compute the $K$-theory of $A_{3}$, write $X_{3}$ as the disjoint union
\[
X_{3} = X_{2} \cup U_{1} \cup U_{2},
\]
with
\[
\begin{array}{l}
U_{1} := \left\lbrace (a,c)\in SU(2) \ : \ \op{Im}(a)=0,\, \op{Im}(c)>0 \right\rbrace,\\
U_{2} := \left\lbrace (a,c)\in SU(2) \ : \ \op{Im}(a)>0,\, \op{Im}(c)=0 \right\rbrace.
\end{array}
\]
With this decomposition, it is obvious that $\op{ker}(\pi_{2})\subseteq M_{2}(C_{0}(U_{1}\cup U_{2}))$. More precisely, we get an exact sequence 
\begin{align}
\label{exactA3}
\begin{xy}
\xymatrix{
	0 \ar[r] & I_{1}\oplus I_{2} \ar[r]& A_{3} \ar[r]^{\pi_{2}} & A_{2} \ar[r] & 0,
		}
\end{xy}
\end{align}
where
\[
\begin{array}{l}
I_{1} := \left\lbrace \begin{pmatrix}f&g\\h&k\end{pmatrix} \in M_{2}(C_{0}(U_{1})) \ : \ f=k\cdot s,\, g=h\cdot s \right\rbrace \\
I_{2} := \left\lbrace \begin{pmatrix}f&g\\h&k\end{pmatrix}\in M_{2}(C_{0}(U_{2})) \ : \ \begin{matrix}f=f\cdot r,\, g=-g\cdot r, \\ h=-h\cdot r,\,  k=k\cdot r \end{matrix} \right\rbrace.
\end{array}
\]
Note that $I_{1}$ and $I_{2}$ are isomorphic via the automorphism of $C(SU_{-1}(2))$ which exchanges $\alpha$ and $\gamma$. If $\mathbb D\subseteq \mathbb C$ denotes the closed unit disk, then there is a homeomorphism 
\[
U_{2} \longrightarrow \mathbb D\setminus \mathbb T, \quad (a+ib,c)\mapsto a+ic.
\]
An element in $I_{2}$ is uniquely determined by its values on elements $(a,c)\in U_{2}$ with $c\geq0$. Hence, the above homeomorphism induces an isomorphism
\[
I_2\stackrel{\cong}{\longrightarrow} \left\lbrace f\in C_{0}(\mathbb D\setminus \mathbb T_+,\, M_{2}(\mathbb C)) \ : \ f(x)\in C^*(v_0),\, x\in \mathbb T\setminus\mathbb T_+ \right\rbrace,
\]
where $\mathbb T_+\subseteq \mathbb T$ consists of all elements with modulus one and non-negative real part. As a consequence, $I_{2}$ is homotopy equivalent to
\[
I := \left\lbrace  f\in C_{0}(\mathbb D\setminus \left\lbrace1\right\rbrace,\,  M_{2}(\mathbb C))
\ : \ f(x)\in C^{*}(v_0),\, x\in \mathbb T\setminus\left\lbrace 1\right\rbrace \right\rbrace.
\]
The $C^{*}$-algebras $I_1$, $I_2$, and $I$ all have isomorphic $K$-theory, which we now determine by using the exact sequence
\begin{align}
\label{extensionI}
\begin{xy}
	\xymatrix{
	 0 \ar[r]& C_{0}(\mathbb D\setminus \mathbb T,\, M_{2}(\mathbb C)) \ar[r] &  I \ar[r] & C_{0}((0,1),\, C^{*}(v_0)) \ar[r] & 0
	}
\end{xy}
\end{align}
induced by restriction on $\mathbb T\setminus\left\lbrace 1\right\rbrace$.

\begin{lemma} 	
\label{I}
We have $K_{0}(I)=0$ and $K_{1}(I)\cong\mathbb Z$. Moreover, if
\[
\rho: K_{1}(C_{0}((0,1),\, C^*(v_0))) \longrightarrow K_0(C_0(\mathbb D\setminus \mathbb T))
\]
denotes the index map associated with \eqref{extensionI}, then 
\[
K_1(I)\longrightarrow 
\op{ker}(\rho) = \mathbb Z\cdot \left[\op{diag}(z,\bar{z})\right]
\]
is an isomorphism.
\end{lemma}
\begin{proof}
Consider the exact six-term sequence associated with \eqref{extensionI}
\[
	\xymatrix{
	 \mathbb Z \cong K_{0}(C_{0}(\mathbb D\setminus \mathbb T)) \ar[r] & K_{0}(I) \ar[r] & 0 \ar[d] \\
	 \mathbb Z^{2} \cong  K_{1}(C_0((0,1),\, C^*(v_0))) \ar[u]^\rho & K_{1}(I) \ar[l] & 0 \ar[l]
	}
\]
As generators for $K_{1}(C_{0}((0,1),\, C^{*}(v_0)))$, we choose
\[
e_{1} := \left[\begin{pmatrix}z&0\\0&1\end{pmatrix}\right] \quad \text{and} \quad e_{2} := \left[\begin{pmatrix}1&0\\0&z\end{pmatrix}\right].
\]
It suffices to show that $\rho$ sends $e_{1}$ and $e_{2}$ to the Bott element. We only show this for $e_{1}$, the other case is similar. Let $I^{\sim}$ be the unitization of $I$ and $u \in M_{2}(I^\sim)$ the unitary lift for $\operatorname{diag}(z,1,\bar{z},1)$ given by 
\[
u(t) := \begin{pmatrix} t&0&-\sqrt{1-|t|^{2}}&0\\
						0&1&0&0\\
						\sqrt{1-|t|^{2}}&0&\bar{t}&0\\
						0&0&0&1
\end{pmatrix}, \quad t \in \mathbb D.
\]
If $1_{2}\in M_{2}(I^\sim)$ denotes the unit, then
\[
\begin{array}{lcl}
\rho(e_{1})  & = & [u\cdot \op{diag}(1_{2},0)\cdot u^{*}] - [1_{2}] \\
 & = & \left[\begin{pmatrix} |z|^{2}&z\sqrt{1-|z|^{2}}\\ \bar{z}\sqrt{1-|z|^{2}}&1-|z|^{2} \end{pmatrix}\right] - \left[\begin{pmatrix} 1&0\\0&0\end{pmatrix}\right],
\end{array}
\]
which is the Bott element in $K_0(C_0(\mathbb D\setminus \mathbb T))$. This completes the proof.
\end{proof}

\begin{remark}
\label{I1I2}
Consider $V_1$, $V_2\subseteq SU(2)$ defined as
\[
V_{1} := \left\lbrace (a,c)\in U_{1} \ : \ a=0 \right\rbrace \quad \text{and} \quad V_{2} := \left\lbrace  (a,c)\in U_{2} \ : \ c=0 \right\rbrace.
\]
Lemma \ref{I} shows that the homomorphisms
\[
K_{1}(I_{1}) \longrightarrow K_{1}(C_0(V_1,\, C^*(v_1))) \quad \text{and}\quad  K_1(I_2) \longrightarrow K_1(C_0(V_2,\, C^*(v_0))),
\]
induced by the inclusions $V_1\subseteq U_1$ and $V_2\subseteq U_2$, are injective.
\end{remark}

We proceed by computing the $K$-theory for $A_{3}$.

\begin{lemma}
\label{A3}
It holds that $K_{0}(A_{3})=\mathbb Z\cdot [1]$ and $K_{1}(A_{3})\cong\mathbb Z/2\Z$.
\end{lemma}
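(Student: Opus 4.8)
The plan is to feed the extension \eqref{exactA3} into the six-term exact sequence in $K$-theory. By Lemma \ref{A2} we have $K_0(A_2)\cong\mathbb Z^3$, generated by $[\tilde p]$, $[\tilde q]$ and $[1]$, and $K_1(A_2)=0$; by Lemma \ref{I} together with the isomorphism $I_1\cong I_2$ we have $K_0(I_1\oplus I_2)=0$ and $K_1(I_1\oplus I_2)\cong\mathbb Z^2$. Hence the six-term sequence collapses to the exact sequence
\begin{align*}
0 \ \longrightarrow \ K_0(A_3) \ \longrightarrow \ \mathbb Z^3 \ \xrightarrow{\ \partial\ } \ \mathbb Z^2 \ \longrightarrow \ K_1(A_3) \ \longrightarrow \ 0 \ ,
\end{align*}
where $\partial$ is the exponential map associated with \eqref{exactA3}. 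So everything reduces to computing $\partial$ on the three generators of $K_0(A_2)$: then $K_0(A_3)\cong\ker\partial$ and $K_1(A_3)\cong\operatorname{coker}\partial$.

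Since $\pi_2$ is unital, $[1]$ lifts to a projection in $A_3$, so $\partial[1]=0$, and it remains to determine $\partial[\tilde p]$ and $\partial[\tilde q]$. For this I would pick explicit self-adjoint lifts $a_p,a_q\in A_3$ of $\tilde p,\tilde q$ — that is, continuous self-adjoint $M_2(\mathbb C)$-valued functions on $X_3$ satisfying the symmetry relations defining $A_3$ and restricting to $p_0,q_0$ on $X_2$ — and then compute the unitaries $\exp(2\pi i\,a_p),\exp(2\pi i\,a_q)\in(I_1\oplus I_2)^\sim$. Their classes in $K_1(I_1\oplus I_2)\cong\mathbb Z^2$ I would read off by restricting to the semicircles $V_1$ and $V_2$ and computing winding numbers, using that, by Lemma \ref{I} and the injectivity statement following it, the generator of $K_1(I_1)$ (resp.\ $K_1(I_2)$) corresponds to the class of $\operatorname{diag}(z,\bar z)$ in $K_1(C_0(V_1,C^*(v_s)))$ (resp.\ $K_1(C_0(V_2,C^*(v_r)))$). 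Equivalently, one may first restrict the whole extension \eqref{exactA3} to the closed subset $X_2\cup V_1\cup V_2$ of $X_3$ — where the ideal becomes honestly $C_0(V_1,C^*(v_s))\oplus C_0(V_2,C^*(v_r))$, the off-diagonal matrix entries collapsing on each $V_j$ — compute the more transparent exponential map there, and transport the answer back by naturality.

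The expected outcome is that the subgroup of $\mathbb Z^2$ generated by $\partial[\tilde p]$ and $\partial[\tilde q]$ has index $2$; in particular these two elements are linearly independent. Granting this, $\ker\partial=\mathbb Z\cdot[1]$, hence $K_0(A_3)=\mathbb Z\cdot[1]$, and $\operatorname{coker}\partial=\mathbb Z^2/\operatorname{im}\partial\cong\mathbb Z_2$, hence $K_1(A_3)=\mathbb Z_2$. The main obstacle is precisely the computation of $\partial[\tilde p]$ and $\partial[\tilde q]$: the projections $p_0,q_0$ need not extend over the hemispheres $U_1,U_2$ as projections, and the admissible homotopies there are constrained by the $G$-symmetry, so care is needed both in writing down self-adjoint lifts that genuinely lie in $A_3$ and in tracking how the symmetry relations degenerate upon restriction to $V_1$ and $V_2$. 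A secondary but essential point is pinning down the parity — verifying that the index of $\operatorname{im}\partial$ in $\mathbb Z^2$ is exactly $2$ and not $1$ or $4$ — since this is what distinguishes the asserted $K$-groups.
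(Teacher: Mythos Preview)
Your strategy is exactly the paper's: use the six-term sequence for \eqref{exactA3}, note $\partial[1]=0$, lift $\tilde p,\tilde q$ to self-adjoint elements of $A_3$, and read off the images of the exponential map via the restrictions to $V_1,V_2$ and Lemma~\ref{I}. So the approach is correct.

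The gap is that you never carry out the computation you describe: you write ``the expected outcome is\ldots'' and ``granting this\ldots'', but determining $\partial[\tilde p]$ and $\partial[\tilde q]$ \emph{is} the content of the lemma. The paper does this explicitly. It writes down positive lifts $p,q\in A_3$ of the form $(1-\operatorname{Im}(a)^2)\cdot\tilde p(t(a,c))$ etc., where $t(a,c)$ is the radial projection to $X_2$, and then evaluates $\exp(2\pi ip)$ and $\exp(2\pi iq)$ on $V_1$ and $V_2$. On $V_2$ the boundary values $\tilde p(\pm1,0)$ and $\tilde q(\pm1,0)$ are the two diagonal rank-one projections, but with the roles swapped between $\tilde p$ and $\tilde q$; this forces $\exp(2\pi ip)|_{V_2}\sim_h\operatorname{diag}(z,\bar z)$ and $\exp(2\pi iq)|_{V_2}\sim_h\operatorname{diag}(\bar z,z)$, so $e_2=-f_2$ is a generator of $K_1(I_2)$. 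On $V_1$ the boundary values $\tilde p(0,\pm1)=\tilde q(0,\pm1)$ coincide, so $\exp(2\pi ip)$ and $\exp(2\pi iq)$ agree there and $e_1=f_1$; the $\alpha\leftrightarrow\gamma$ symmetry then shows $e_1$ generates $K_1(I_1)$. Hence $\partial$ sends $[\tilde p],[\tilde q]$ to $(1,1),(1,-1)$, an index-$2$ sublattice, resolving exactly the parity issue you flag. Without this step your proposal is a correct outline but not yet a proof.
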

\begin{proof}
The exact sequence \eqref{exactA3} and Lemma \ref{I} induce the following six-term exact sequence
\begin{align}
\label{sixTermA3}
\begin{xy}
	\xymatrix{
	0 \ar[r] & K_{0}(A_{3}) \ar[r]^/-0.3cm/{K_{0}(\pi_{3})} & K_{0}(A_{2}) \cong \mathbb Z^{3} \ar[d]^\rho\\
	0 \ar[u] & K_{1}(A_{3}) \ar[l] & K_{1}(I_{1})\oplus K_{1}(I_{2}) \cong  \mathbb Z\oplus \mathbb Z\ar[l]
			}
\end{xy}
\end{align}
It is clear from Lemma \ref{A2} that $[1]\in K_{0}(A_{3})$ generates a copy of $\mathbb Z$. So it suffices to determine the images of $[\tilde{p}],[\tilde{q}]\in K_0(A_2)$ under $\rho$. Set
\[
t(a,c) := \left(\frac{\op{Re}(a)}{\sqrt{\op{Re}(a)^2 +  \op{Re}(c)^{2}}},\, \frac{\op{Re}(c)}{\sqrt{\op{Re}(a)^{2} +  \op{Re}(c)^{2}}}\right),\quad 0\neq(a,c)\in X_3,
\]
and define positive lifts $p$, $q\in A_{3}$ for $\tilde{p}$ and $\tilde{q}$ by
\[
p(a,c) :=  
\begin{cases}
(1-\op{Im}(a)^{2}) \cdot \tilde{p}(t(a,c))
&, \quad\text{if}\ \op{Im}(a)\neq1 ,\ \op{Im}(c)=0,	\\
(1-\op{Im}(c)^{2}) \cdot \tilde{p}(t(a,c))
&, \quad\text{if}\ \op{Im}(a)=0 , \ \op{Im}(c)\neq1 , \\
0 &, \quad\text{if}\ \operatorname{Im}(a)=1, \ \text{or}\ \op{Im}(c)=1,
\end{cases}
\]

\[
q(a,c) :=  
\begin{cases}
(1-\op{Im}(a)^{2}) \cdot \tilde{q}(t(a,c))
&, \quad\text{if}\ \op{Im}(a)\neq1 ,\  \op{Im}(c)=0, \\
(1-\op{Im}(c)^{2}) \cdot \tilde{q}(t(a,c))
&, \quad\text{if}\ \op{Im}(a)=0 , \ \op{Im}(c)\neq1,\\
0 &, \quad\text{if}\ \op{Im}(a)=1,\ \text{or}\ \op{Im}(c)=1.
\end{cases}
\]
Consequently, $\rho([\tilde{p}])=[ \, \exp(2\pi ip) \, ]$ and $\rho([\tilde{q}])=[ \, \exp(2\pi iq) \, ]$. Using the identification $K_1(I_1\oplus I_2)\cong K_1(I_1)\oplus K_1(I_2)$, we write
\[
e = (e_{1},e_{2}) := [ \exp(2\pi ip) ]\quad \text{and}\quad f = (f_{1},f_{2}) := [ \exp(2\pi iq) ].
\]
Let $(a,0)\in V_{2}$ be an arbitrary element, that is, $a\in \mathbb T$ with $-1<\op{Re}(a)<1$ and $\op{Im}(a)>0$. Recall that $\tilde{p}$ and $\tilde{q}$ satisfy 
\[
\tilde{p}(1,0) = \tilde{q}(-1,0)=\begin{pmatrix}1&0\\0&0\end{pmatrix}\quad \text{and} \quad \tilde{p}(-1,0) = \tilde{q}(1,0) = \begin{pmatrix} 0 & 0 \\ 0 & 1 \end{pmatrix}.
\]
Thus, if $r(a):=\exp(-2\pi i\op{Im}(a)^{2})$, then
\[
\exp(2\pi ip)(a,0) = 
\begin{cases}
\begin{pmatrix}1&0\\0&r(a)\end{pmatrix}&, \quad \text{if}\ \op{Re}(a)<0,\\
\begin{pmatrix}r(a)&0\\0&1\end{pmatrix}&, \quad \text{if}\ \op{Re}(a)>0,\\
\begin{pmatrix}1&0\\0&1\end{pmatrix}&, \quad \text{if}\ \op{Re}(a)=0,
\end{cases}
\]
and
\[
\exp(2\pi iq)(a,0) =  
\begin{cases}
\begin{pmatrix}r(a)&0\\0&1\end{pmatrix}&, \quad \text{if}\ \op{Re}(a)<0, \\
\begin{pmatrix}1&0\\0&r(a)\end{pmatrix}&, \quad \text{if}\ \op{Re}(a)>0,\\
\begin{pmatrix}1&0\\0&1\end{pmatrix}&, \quad \text{if}\ \op{Re}(a)=0.
\end{cases}
\]
Since $\overline{V_{2}}=V_{2}\cup \left\lbrace(1,0),\ (-1,0)\right\rbrace$ and $r(0)=1$, we can consider the restrictions of $\exp(2\pi ip)$ and $\exp(2\pi iq)$ to functions on $\overline{V_2}\subseteq X_3$ as elements in $C_{0}(V_{2},C^{*}(v_0))^{\sim}\cong C(\mathbb T,C^{*}(v_0))$. This identification can be chosen so that
\[
\exp(2\pi ip)\sim_{h} \op{diag}(z,\bar{z}),\ \exp(2\pi iq) \sim_{h} \op{diag}(\bar{z},z) \ \text{in} \ \mathcal U(C(\mathbb T,C^{*}(v_0))).
\]
A combination of Lemma \ref{I} and Remark \ref{I1I2} shows that $e_{2}=-f_{2}\in K_{1}(I_{2})$, and that $e_2$ is a generator for $K_1(I_2)\cong\Z$. Using the relations
\[
\tilde{p}(0,1) = \tilde{q}(0,1) = \frac{1}{2}\cdot \begin{pmatrix}1&1\\1&1\end{pmatrix}\quad \text{and}\quad \tilde{p}(0,-1) = \tilde{q}(0,-1) = \frac{1}{2}\cdot \begin{pmatrix}1&-1\\-1&1\end{pmatrix},
\]
one concludes that $\exp(2\pi ip)$ and $\exp(2\pi iq)$ coincide on $V_1$. A similar reasoning as above reveals that $e_{1}=f_{1}\in K_{1}(I_{1})$, and that $e_{1}$ is a generator for $K_{1}(I_{1})$. Altogether, we have shown that the image of $\rho$ is generated by $(1,1)$ and $(1,-1)\in \Z^2\cong K_1(I_1)\oplus K_1(I_2)$. Hence,
\[
K_0(A_3)\cong \op{ker}(\rho)\cong \Z\quad \text{and}\quad K_1(A_3)\cong \op{coker}(\rho)\cong \Z/2\Z.
\]
\end{proof}

The $K$-theory of $C(SU_{-1}(2))$ can now be computed by considering the exact sequence
\[
	\xymatrix{
	 0 \ar[r]& M_{2}(C_{0}(X_{4}\setminus X_{3})) \ar[r]^/0.6cm/\iota & A_{4} \ar[r]^{\pi_{3}} & A_{3} \ar[r] & 0.
	}
\]
Observe that $X_{4}\setminus X_{3}$ is homeomorphic to $\mathbb R^{3}$. By passing to the corresponding six-term exact sequence, we see with the help of Lemma \ref{A3} that $K_{0}(C(SU_{-1}(2)))=\mathbb Z\cdot [1]$, and $K_1(C(SU_{-1}(2)))$ fits into a short exact sequence
\[
	\xymatrix{
	 0 \ar[r]& \mathbb Z \ar[r] & K_{1}(C(SU_{-1}(2)))  \ar[r] & \Z/2\Z \ar[r] & 0.
	 }
\]
This implies that $K_{1}(C(SU_{-1}(2)))$ is isomorphic to either $\mathbb Z$ or $\mathbb Z\oplus\Z/2\Z$.

\begin{theorem} 
\label{K}
The $K$-theory for $C(SU_{-1}(2))$ satisfies
\[
K_{0}(C(SU_{-1}(2)))=\mathbb Z\cdot[1]\quad \text{and}\quad  K_{1}(C(SU_{-1}(2)))=\mathbb Z\cdot[u_{-1}].
\]
\end{theorem}
\begin{proof}
We only have to show that $K_{1}(C(SU_{-1}(2)))=\Z\cdot[u_{-1}]$. Consider the commutative diagram
\[
	\xymatrix{
	M_{2}(C_{0}(X_{4}\setminus X_{3})) \ar[r]^/0.6cm/{\iota} \ar@{-->}[d]_{\varphi}& A_4 \ar[d]^\cong \\
	M_{2}(C(SU(2))) & C(SU_{-1}(2)) \ar[l]_/-0.3cm/{\phi}\\
						}
\]
where $\phi$ is the injective $*$-homomorphism from Theorem \ref{fixed point}. Define unitaries $x,y \in \mathcal U_2(C(SU(2)))$ by 
\[
x(a,c) := \begin{pmatrix} a&\bar{c}\\c&-\bar{a}\end{pmatrix}\quad \text{and} \quad  y(a,c) := \begin{pmatrix} -a&\bar{c}\\c&\bar{a}\end{pmatrix}.
\]
Recall that $K_1(C(SU(2)))\cong \Z$ with generator $[u]$, where $u\in \mathcal U_2(C(SU(2)))$ is given by
\[
u(a,c):=\begin{pmatrix}a & -\bar{c} \\ c & a \end{pmatrix}.
\]
It is easily verified that
\[
K_{1}(\phi)([u_{-1}])=[x]+[y]=2\cdot [u]\in K_1(C(SU(2))).
\]
For $f\in C_{0}(X_{4}\setminus X_{3},\, M_{2}(\mathbb C))$, let $\tilde{f}\in C(SU(2),\, M_{2}(\mathbb C))$ denote the canonical extension, that is, $\tilde{f}(x)=0$ whenever $x\notin X_4\setminus X_3$. The identification $A_4\cong C(SU_{-1}(2))$ yields that
\[
\varphi(f) = \tilde{f}  +  \beta_r(\tilde{f}) + \beta_s(\tilde{f}) + \beta_{r}\circ\beta_s(\tilde{f}).
\]
In fact, $\varphi(f)\in C(SU(2),M_2(\mathbb C))$ is clearly $\beta$-invariant, and extends $f$. For the latter note that the four functions have disjoint open support. By the same reason, $\varphi$ is the sum of four $*$-homomorphism
\[
M_{2}(C_{0}(X_{4}\setminus X_{3})) \longrightarrow M_2(C(SU(2))).
\]
Moreover, all these $*$-homomorphism are homotopic, so that by additivity of $K$-theory, the image of $K_{1}(\varphi)$ is contained in $4\cdot K_{1}(C(SU(2)))$.

Now assume that $K_{1}(C(SU_{-1}(2)))\cong \mathbb Z\oplus\Z/2\Z$. In this case,
\[
K_1(\iota):K_1(C(X_4\setminus X_3))\longrightarrow K_1(C(SU_{-1}(2)))
\]
may be considered as the canonical embedding $\Z\to\Z\oplus\Z/2\Z$. Consequently, the image of $K_{1}(\varphi)$ coincides with the image of $K_{1}(\phi)$. In particular,
\[
2\cdot[u]\in 4\cdot K_1(C(SU(2))),
\]
which contradicts the fact that $[u]$ is a generator for $K_1(C(SU(2)))\cong \Z$. Thus, $K_{1}(C(SU_{-1}(2)))\cong \mathbb Z$.  It also implies that $K_{1}(\iota)$ is multiplication with $2$, and so $K_{1}(\phi)$ has to be multiplication with $2$, as well. Hence, $[u_{-1}]$ is a generator for $K_{1}(C(SU_{-1}(2)))$, which completes the proof.
\end{proof}

\section{\texorpdfstring{A continuous $C^*$-bundle over $[-1,0)$ with fibres  $C(SU_{q}(2))$}{Continuous bundle}}

In this section, we show the existence of a continuous $C^*$-bundle over $[-1,0)$ with the fibre at $q$ being isomorphic to $C(SU_{q}(2))$. The proof is very similar to Blanchard's analogous result for positive deformation parameters  \cite{Blanchard1996}, and makes use of the Haar state on $C(SU_q(2))$. 

Let $h:C(SU(2))\to \mathbb C$ be the state given by integration with respect to the Haar measure on $SU(2)$, and let $\op{tr}:M_2(\mathbb C)\to \mathbb C$ denote the normalised trace. It has been pointed out in \cite{Zakrzewski1991} that the Haar state $h_{-1}$ on $C(SU_{-1}(2))\subseteq C(SU(2))\otimes M_2(\mathbb C)$ coincides with the restriction of $h\otimes \op{tr}$. In particular, this shows that $h_{-1}$ is faithful, cf.\ \cite[I, §4, Lemma 1.8]{BrownOzawa2008}.  For the reader's convenience, we sketch a proof for the fact that $h\otimes\op{tr}$ restricts to $h_{-1}$.

\begin{proposition}
\label{HaarFaithfulq=-1}
The Haar state $h_{-1}$ is the unique state on $C(SU_{-1}(2))$ satisfying
\[
h_{-1}(\eta_{klm}) =
\begin{cases}
\frac{1}{m + 1} & ,\quad\text{if}\ k=0,\ l=m,\\
0 & ,\quad \text{otherwise,}
\end{cases}
\]
where $\eta_{klm}\in \mathcal B_{-1}$ is defined as in Section \ref{prelim}. Under the identification via the injective $*$-ho\-mo\-mor\-phism $\phi$ from Theorem \ref{fixed point}, $h_{-1}$ is given as the restriction of $h\otimes\op{tr}$. In particular, $h_{-1}$ is faithful.
\end{proposition}
\begin{proof}
Let $[\cdot,\cdot]$ denote the commutator on $C(SU_{-1}(2))\otimes C(SU_{-1}(2))$, and observe that
\[
[\alpha\otimes\alpha, \gamma^{*}\otimes \gamma ] = [ \gamma\otimes \alpha, \alpha^{*}\otimes \gamma ] = 0.
\]
Using this, one computes for $k$, $l$, and $m\in \mathbb N_{0}$ that
\[
\begin{array}{l}
\Delta_{-1}(\eta_{klm}) = \\
\sum\limits_{i=0}^{k}\tbinom k i \alpha^{i}{\gamma^{*}}^{k-i}\mathord\otimes
\alpha^{i}\gamma^{k-i}\sum\limits_{j=0}^{l}\tbinom l j \gamma^{j}{\alpha^{*}}^{l-j}\mathord\otimes\alpha^{j}\gamma^{l-j}\sum\limits_{p=0}^{m}\tbinom m p {\gamma^{*}}^{p}\alpha^{m-p}\mathord\otimes{\alpha^{*}}^{p}{\gamma^{*}}^{m-p}.
\end{array}
\]
Right invariance of the Haar state yields 
\begin{equation}
\label{RightInv}
\begin{split}
&h_{-1}(\eta_{klm}) \cdot 1 \ = \ \\
&\sum_{i=0}^{k} \sum_{j=0}^{l}\sum_{p=0}^{m}\tbinom k i\tbinom l j\tbinom m p
h_{-1}(\alpha^{i}{\gamma^{*}}^{k-i}\gamma^{j}{\alpha^{*}}^{l-j}{\gamma^{*}}^{p}\alpha^{m-p})\mathord\cdot
\alpha^{i}\gamma^{k-i}\alpha^{j}\gamma^{l-j}{\alpha^{*}}^{p}{\gamma^{*}}^{m-p},
\end{split}
\end{equation}
and similarly left invariance shows
\begin{equation}
\label{LeftInv}
\begin{split}
&h_{-1}(\eta_{klm})\cdot 1 \ = \ \\
&\sum_{i=0}^{k} \sum_{j=0}^{l}\sum_{p=0}^{m}\tbinom k i\tbinom l j\tbinom m p
h_{-1}(\alpha^{i}\gamma^{k-i}\alpha^{j}\gamma^{l-j}{\alpha^{*}}^{p}{\gamma^{*}}^{m-p})\mathord\cdot\alpha^{i}{\gamma^{*}}^{k-i}\gamma^{j}{\alpha^{*}}^{l-j}{\gamma^{*}}^{p}\alpha^{m-p}.
\end{split}
\end{equation}
Let $\pi_{(a,0)}$ be the character associated to $a\in \mathbb T$ from Section \ref{prelim}. Since $\pi_{(a,0)}(\gamma)=0$, all summands on the right hand side of \eqref{RightInv} vanish but the one associated to $i=k$, $j=l$ and $p=m$. Actually,
\[
h_{-1}(\eta_{klm}) = h_{-1}(\eta_{klm}) \cdot a^{k+l-m}.
\]
The analogous observation for \eqref{LeftInv} yields
\[
h_{-1}(\eta_{klm}) = h_{-1}(\eta_{klm}) \cdot a^{k-l+m}.
\]
Since these equations hold for every $a\in \mathbb T$, $h_{-1}(\eta_{klm})\neq0$ is only possible when
\[
k  + l - m = k - l + m = 0.
\]
This is equivalent to $k=0$ and $l=m$. In this case, by using the anticommutativity relations, \eqref{RightInv} simplifies to
\[
\begin{array}{lll}
h_{-1}(\eta_{0mm}) \cdot 1 & &\\
&\hspace{-2.1cm} = & \hspace{-2.1cm} \sum\limits_{p=0}^{m}{\tbinom m p}^{2} h_{-1}(\gamma^{p}{\gamma^{*}}^{p}\alpha^{m-p}{\alpha^{*}}^{m-p})\cdot \alpha^{p}{\alpha^{*}}^{p}\gamma^{m-p}{\gamma^{*}}^{m-p}\\
&\hspace{-2.1cm} = & \hspace{-2.1cm}  \sum\limits_{p=0}^{m}{\tbinom m p}^{2}
h_{-1}(\gamma^{p}{\gamma^{*}}^{p}(1-\gamma\gamma^{*})^{m-p})\cdot (1-\gamma\gamma^{*})^{p}\gamma^{m-p}{\gamma^{*}}^{m-p}\\
&\hspace{-2.1cm} = & \hspace{-2.1cm} \sum\limits_{p=0}^{m}\sum\limits_{i=0}^{m-p}\sum\limits_{j=0}^{p}{\tbinom m p}^{2}\tbinom {m-p} i \tbinom p j (-1)^{i+j} h_{-1}(\eta_{0(p+i)(p+i)})\cdot\eta_{0(m-p+j)(m-p+j)}.
\end{array}
\]
Assume that $m\geq1$, and let $c$ denote the coefficient of $\eta_{011}$ in the last expression (after reordering). Then
\[
c = -m\cdot h_{-1}(\eta_{0mm})  +  m^{2}\cdot h_{-1}(\eta_{0(m-1)(m-1)}) -  m^{2}\cdot h_{-1}(\eta_{0mm}).
\]
Since the family $\left\lbrace\eta_{0pp}\ :\ p\in \mathbb N_{0}\right\rbrace$ is linearly independent, $c=0$, and consequently
\[
h_{-1}(\eta_{0mm})=\frac{m^2}{m^2+m}\cdot h_{-1}(\eta_{0(m-1)(m-1)}).
\]
Hence, if we already know that
\[
h_{-1}(\eta_{0(m-1)(m-1)}) = \frac{1}{m},
\]
then
\[
h_{-1}(\eta_{0mm}) = \frac{m^{2}}{m^{2}+m}\cdot h_{-1}(\eta_{0(m-1)(m-1)}) = \frac{1}{m+1}.
\]
It is routine to verify that $h\otimes \op{tr}(\phi(\eta_{klm}))=h_{-1}(\eta_{klm})$.
\end{proof}

\begin{definition}[\cite{Kasparov1988}]
Let $X$ be a locally compact Hausdorff space. A \emph{$C_{0}(X)$-algebra} is a $C^{*}$-algebra $A$ together with a non-degenerate $*$-homo\-mor\-phism from $C_{0}(X)$ into the center of $\M(A)$.
\end{definition}
If $A$ is a $C_{0}(X)$-algebra, then for every $x\in X$, $C_{0}(X\setminus\left\lbrace x\right\rbrace)A$ forms a closed two-sided ideal. The respective quotient $A_{x}$ is called the \emph{fibre} at $x$. For $a\in A$, we denote by $a_{x}$ its image under the canonical surjection onto $A_{x}$. One fundamental property of $C_{0}(X)$-algebras is that the map
\[
X \longrightarrow \mathbb R, \quad x\mapsto \Vert a_{x}\Vert
\]
is upper-semicontinuous for every $a\in A$. We call $A$ a \emph{continuous $C^{*}$-bundle} over $X$ if all these maps are continuous.

\begin{theorem}
There exists a continuous $C^*$-bundle $A$ over $[-1,0)$ with the property that the fibre at $q\in[-1,0)$ is isomorphic to $C(SU_{q}(2))$.
\end{theorem}
\begin{proof}
Let $B$ be the universal unital $C^{*}$-algebra with generators $\alpha$, $\gamma$, and $f$, satisfying the following relations: $f$ is normal, has spectrum $[-1,0]$, commutes with $\alpha$ and $\gamma$, and
\[
\begin{pmatrix}\alpha& -f\gamma^{*}\\ \gamma & \alpha^{*}\end{pmatrix} \in M_{2}(B)
\]
is unitary. Since $f$ is central and normal, $B$ is a $C([-1,0])$-algebra. By the respective universal properties of $C(SU_{q}(2))$ and $B_{q}$, it follows that the natural homomorphism $C(SU_{q}(2))\to B_{q}$ is an isomorphism. Thus,
\[
A:= C_{0}([-1,0))B
\]
is a $C_{0}([-1,0))$-algebra with the property that the  fibre at $q\in[-1,0)$ is isomorphic to $C(SU_{q}(2))$. By Proposition \ref{HaarFaithful} and \ref{HaarFaithfulq=-1}, all Haar states are faithful, and we can apply \cite[Theorem 3.3]{Blanchard1996} stating that $A$ is a continuous $C^*$-bundle if there is a $C_{0}([-1,0))$-linear, positive map
\[
\varphi: A \longrightarrow C_{0}([-1,0))
\]
with $h_{q}=\op{ev}_{q}\circ \varphi$ for all $q\in [-1,0)$. A straightforward partition of unity argument shows that the $C_{0}([-1,0))$-linear span of
\[
\eta_{klm} := 
\begin{cases}
\alpha^{k}\gamma^{l}{\gamma^{*}}^{m}& ,\quad \text{if}\ k\geq0,\, m,n\geq0, \\
{\alpha^{*}}^{-k}\gamma^{l}{\gamma^{*}}^{m} & ,\quad \text{if}\ k<0,\, m,n\geq0,
\end{cases}
\]
is dense in $A$. For $a\in A$, we define a map 
\[
\varphi(a): [-1,0) \longrightarrow \mathbb C, \quad \varphi(a)(q) := h_{q}(a_{q}).
\]
Observe that $\varphi(g\cdot a)=g\cdot\varphi(a)$ for any $g\in C_0([-1,0))$. Moreover,
\[
\varphi(\eta_{0mm})(q) = \frac{1 - q^{2}}{1 - q^{2m+1}} \stackrel{q\longrightarrow-1}{\xrightarrow{\hspace*{1cm}}}\frac{1}{m+1} = \varphi(\eta_{0mm})(-1),
\]
and $\varphi(\eta_{klm})=0$ in all other cases. Hence, $\varphi(a)\in C_{0}([-1,0))$ for every $a\in A$, and $\varphi$ is a well-defined and $C_0([-1,0))$-linear map. Moreover, $\varphi$ is positive, since each $h_{q}$ is a state. The proof is complete.
\end{proof}

\bibliographystyle{plain}

\bibliography{D:/Mathe/Projekte/sbarlak}

\begin{thebibliography}{10}

\bibitem{Banica1997}
T.~Banica.
\newblock {Le Groupe Quantique Compact Libre {$U(n)$}}.
\newblock {\em Comm. Math. Phys.}, 190(1):143--172, 1997.

\bibitem{BichonDeRijdtVaes2006}
J.~Bichon, A.~De~Rijdt, and S.~Vaes.
\newblock {Ergodic Coactions with large Multiplicity and Monoidal Equivalence
  of Quantum Groups}.
\newblock {\em Comm. Math. Phys.}, 262(3):703--728, 2006.

\bibitem{Blackadar2005}
B.~Blackadar.
\newblock {\em {Operator Algebras: Theory of {{$C^{*}$}}-Algebras and von
  Neumann Algebras}}, volume 122 of {\em Encyclopaedia Math. Sci.}
\newblock Springer, 2005.

\bibitem{Blanchard1996}
{\'E}.~Blanchard.
\newblock {D\'eformations de {$C^*$}-alg\`ebres de {H}opf}.
\newblock {\em Bull. Soc. Math. France}, 124(1):141--215, 1996.

\bibitem{BrownOzawa2008}
N.~P. Brown and N.~Ozawa.
\newblock {\em {{$C^*$}-algebras and finite-dimensional approximations}},
  volume~88 of {\em Grad. Stud. Math.}
\newblock Amer. Math. Soc., Providence, RI, 2008.

\bibitem{ChakrabortyPal2003}
P.~S. Chakraborty and A.~Pal.
\newblock {Equivariant Spectral Triples on the Quantum {$SU(2)$} Group}.
\newblock {\em $K$-Theory}, 28(2):107--126, 2003.

\bibitem{DabrowskiGiovanniSitarzSuilekomVarilly2005}
L.~D{\c{a}}browski, L.~Giovanni, A.~Sitarz, W.~Suijlekom, and J.~C.
  V{\'a}rilly.
\newblock {The Dirac Operator on {$SU_q(2)$}}.
\newblock {\em Comm. Math. Phys.}, 259(3):729--759, 2005.

\bibitem{Drinfeld1986}
V.~G. Drinfel'd.
\newblock {Quantum Groups}.
\newblock In {\em Proceedings of the international congress of mathematicians
  (ICM), Berkeley, USA, August 3--11, 1986}, volume~II, pages 798--820. Amer.
  Math. Soc., 1987.

\bibitem{Jimbo1985}
M.~Jimbo.
\newblock {A {$q$}-Difference Analogue of {$U({\mathfrak g})$} and the
  {Y}ang-{B}axter {E}quation}.
\newblock {\em Lett. Math. Phys.}, 10:63--69, 1985.

\bibitem{Kasparov1988}
G.~G. Kasparov.
\newblock {Equivariant {$KK$}-theory and the Novikov conjecture}.
\newblock {\em Invent. Math.}, 91:147--201, 1988.

\bibitem{KlimykSchmuedgen1997}
A.~Klimyk and K.~Schm{\"u}dgen.
\newblock {\em {Quantum groups and their representations}}.
\newblock Texts Monogr. Phys. Springer, Berlin, 1997.

\bibitem{MasudaNakagamiWatanabe1989}
T.~Masuda, Y.~Nakagami, and J.~Watanabe.
\newblock {Noncommutative Differential Geometry on the Quantum {$SU(2)$}, I: An
  Algebraic Viewpoint}.
\newblock {\em K-Theory}, 4(2):157--180, 1989.

\bibitem{Nagy1993}
G.~Nagy.
\newblock {On the Haar measure of the quantum {$SU(N)$} group}.
\newblock {\em Comm. Math. Phys.}, 153:217--228, 1993.

\bibitem{Rosso1990}
M.~Rosso.
\newblock {Alg{\`e}bres Enveloppantes Quantif\'ees, Groupes Quantiques Compacts
  de Matrices et Calcul Diff\'erentiel Non Commutatif}.
\newblock {\em Duke Math. J.}, 61(1):11--40, 1990.

\bibitem{Timmermann2008}
T.~Timmermann.
\newblock {\em {An Invitation to Quantum Groups and Duality: From Hopf Algebras
  to Multiplicative Unitaries and Beyond}}.
\newblock EMS Textbk. Math. Eur. Math. Soc., 2008.

\bibitem{VanDaele1995}
A.~Van~Daele.
\newblock {The Haar Measure on a Compact Quantum Group}.
\newblock {\em Proc. Amer. Math. Soc.}, 123:3125--3128, 1995.

\bibitem{VanDaeleWang1996}
A.~Van~Daele and S.~Wang.
\newblock {Universal Quantum Groups}.
\newblock {\em Internat. J. Math.}, 7(2):255--263, 1996.

\bibitem{Voigt2011}
C.~Voigt.
\newblock {The Baum-Connes Conjecture for Free Orthogonal Quantum Groups}.
\newblock {\em Adv. Math.}, 227(5):1873--1913, 2011.

\bibitem{Woronowicz1987a}
S.~L. Woronowicz.
\newblock {Compact Matrix Pseudogroups}.
\newblock {\em Commun. Math. Phys.}, 111:613--665, 1987.

\bibitem{Woronowicz1987}
S.~L. Woronowicz.
\newblock {Twisted {${\rm SU}(2)$} Group. An Example of a Non-Commutative
  Differential Calculus}.
\newblock {\em Publ. Res. Inst. Math. Sci.}, 23(1):117--181, 1987.

\bibitem{Woronowicz1988}
S.~L. Woronowicz.
\newblock {Tannaka-Krein Duality for Compact Matrix Pseudogroups. Twisted
  {$SU(N)$} Groups}.
\newblock {\em Invent. Math.}, 93(1):35--76, 1988.

\bibitem{Woronowicz1998}
S.~L. Woronowicz.
\newblock {Compact Quantum Groups}.
\newblock In A.~Connes et~al., editors, {\em {Quantum symmetries/ Sym{\'e}tries
  quantiques. Proceedings of the Les Houches summer school, Session LXIV, Les
  Houches, France, August 1 -- Septembter 8, 1995}}, pages 845--884. Amsterdam:
  North-Holland, 1998.

\bibitem{Zakrzewski1991}
S.~Zakrzewski.
\newblock {Matrix Pseudogroups Associated with Anti-Com\-mutative Plane}.
\newblock {\em Lett. Math. Phys.}, 21:309--321, 1991.

\end{thebibliography}

\end{document}